\theoremstyle{plain}
\newtheorem{lma}{Lemma}[section]
\crefname{lma}{Lemma}{Lemmata}
\newtheorem{thm}[lma]{Theorem}
\crefname{thm}{Theorem}{Theorems}
\newtheorem{cor}[lma]{Corollary}
\crefname{cor}{Corollary}{Corollaries}
\newtheorem{prp}[lma]{Proposition}
\crefname{prp}{Proposition}{Propositions}
\theoremstyle{definition}
\crefname{pgr}{Paragraph}{Paragraphs}
\newtheorem{dfn}[lma]{Definition}
\crefname{dfn}{Definition}{Definitions}
\theoremstyle{remark}
\newtheorem{rmk}[lma]{Remark}
\crefname{rmk}{Remark}{Remarks}
\newtheorem{qst}[lma]{Question}
\crefname{qst}{Question}{Questions}
\theoremstyle{plain}
\newcounter{IntroCount}
\newtheorem{thmIntro}[IntroCount]{Theorem}
\crefname{thmIntro}{Theorem}{Theorems}
\def\today{\number\day\space\ifcase\month\or   January\or February\or
   March\or April\or May\or June\or   July\or August\or September\or
   October\or November\or December\fi\   \number\year}
\newcommand{\andSep}{\,\,\,\text{ and }\,\,\,}
\newcommand{\CatCu}{\ensuremath{\mathrm{Cu}}}
\newcommand{\CuSgp}{$\CatCu$-sem\-i\-group}
\newcommand{\filter}{{\mathcal{U}}}
\newcommand{\calZ}{\mathcal{Z}}
\newcommand{\calD}{\mathcal{D}}
\newcommand{\calU}{\mathcal{U}}
\newcommand{\calO}{\mathcal{O}}
\newcommand{\NN}{{\mathbb{N}}}
\newcommand{\CC}{{\mathbb{C}}}
\newcommand{\RR}{{\mathbb{R}}}
\newcommand{\Cpct}{{\mathcal{K}}}
\newcommand{\id}{{\mathrm{id}}}
\newcommand{\dist}{{\mathrm{dist}}}
\newcommand{\ca}{$C^*$-algebra}
\newcommand{\stHom}{$\ast$-ho\-mo\-mor\-phism}
\newcommand{\stIso}{$\ast$-isomorphism}
\DeclareMathOperator{\Sep}{Sep}
\DeclareMathOperator{\Cu}{Cu}
\title{Centrally pure C*-algebras}
\date{\today}
\author{Francesc Perera, Hannes Thiel, Eduard Vilalta}
\address{
F.~Perera, 
Departament de Matem\`{a}tiques,
Universitat Aut\`{o}noma de Barcelona,
\linebreak 08193 Bellaterra, Barcelona, Spain, and
Centre de Recerca Matem\`atica, Edifici Cc, Campus de Bellaterra,  08193 Cerdanyola del Vall\`es, Barcelona, Spain}
\email[]{francesc.perera@uab.cat}
\urladdr{https://mat.uab.cat/web/perera}
\address{Hannes~Thiel, 
Department of Mathematical Sciences, Chalmers University of Technology and the University of Gothenburg, SE-412 96 Gothenburg, Sweden}
\email{hannes.thiel@chalmers.se}
\urladdr{www.hannesthiel.org}
\address{Eduard~Vilalta, 
Department de Matem\`{a}tiques, Universitat Polit\`{e}cnica de Catalunya, Diagonal 647, Barcelona.}
\email{eduard.vilalta@upc.edu}
\urladdr{www.eduardvilalta.com}
\thanks{
FP and EV were partially supported by the Spanish State Research Agency throught the grant  PID2023-147110NB-I00 and by the Comissionat per Universitats i Recerca de la Ge\-ne\-ralitat de Ca\-ta\-lu\-nya (grant No.\ 2021-SGR-01015). FP was also supported by the Spanish State Research Agency through the Severo Ochoa and María de Maeztu Program for Centers and Units of Excellence in R\&D (CEX2020-001084-M).
HT and EV were partially supported by the Knut and Alice Wallenberg Foundation (KAW 2021.0140).
HT was partially supported by the Swedish Research Council project grant 2024-04200.
}
\subjclass[2020]%
{Primary
46L05; 
Secondary
19K14, 
46L80, 
46L85. 
}
\date{\today}
\begin{document}

\begin{abstract}
We show that a separable 
\ca{} $A$ is $\calZ$-stable 
if and only if its uncorrected central sequence algebra $A' \cap A_\filter$ is pure, if and only if Kirchberg's central sequence algebra $F(A)$ is pure.

More generally, we show that a \ca{} $A$ is separably $\calZ$-stable if and only if the relative central sequence algebra $B' \cap A_\filter$ is pure for every separable subalgebra $B \subseteq A_\filter$.
\end{abstract}

\maketitle

\section{Introduction}

Pureness and $\calZ$-stability have emerged as decisive regularity properties in the structure and classification theory of \ca{s}.~Their central roles became apparent in the development of the Elliott program, which seeks to classify simple, nuclear \ca{s} with an invariant encoding $K$-theoretic and tracial data.
Toms \cite{Tom08ClassificationNuclear} constructed examples of such algebras with the same invariant that are nevertheless nonisomorphic, since one is $\calZ$-stable (i.e., it absorbs tensorially the Jiang-Su algebra $\calZ$), while the other is not.~These groundbreaking examples showed that additional regularity assumptions are indispensable in the classification program.

Subsequent work has established that $\calZ$-stability is exactly the additional regularity assumption required.
Indeed, building on decades of research, a series of landmark results \cite{TikWhiWin17QDNuclear, EllGonLinNiu25ClassFinDecompRankII, GonLinNiu20ClassifZstable2} (see also the recent approach \cite{CarGabSchTikWhi23arX:ClassifHom1} and the survey \cite{Whi23AbstractClassif}) established that simple, separable, nuclear, $\calZ$-stable \ca{s} satisfying the Universal Coefficient Theorem (UCT) are classified up to isomorphism by their Elliott invariants.
Moreover, it is conceivable that the UCT condition is superfluous, or that it can be replaced by a classification framework based on $\mathrm{KK}$-theory \cite{Sch24arX:KKRigSimpleNucCAlgs}.

Pureness, introduced by Winter in his foundational work \cite{Win12NuclDimZstable} on the structure of simple, nuclear \ca{s}, is defined as the combination of strict comparison and almost divisibility in the Cuntz semigroup (see~\cref{sec:purecentral}, and also \cite{AntPerThiVil24arX:PureCAlgs, PerThiVil25arX:ExtPureCAlgs} for a more systematic studty of pure \ca{s}).~It is closely related to $\calZ$-stability: R{\o}rdam \cite{Ror04StableRealRankZ} showed that $\calZ$-stability implies pureness in general, while the famous Toms–Winter conjecture \cite{Win18ICM} predicts that for simple, separable, nonelementary, nuclear \ca{s} $\calZ$-stability and strict comparison are equivalent, and this has been verified in many important cases \cite{Win12NuclDimZstable, Sat12arx:TraceSpace, KirRor14CentralSeq, TomWhiWin15ZStableFdBauer, Thi20RksOps}.
Since pureness lies between strict comparison and $\calZ$-stability, the conjecture in particular predicts that pureness and $\calZ$-stability coincide in this setting.
More broadly, the nonsimple Toms–Winter conjecture \cite{AntPerThiVil24arX:PureCAlgs} proposes that a separable, nowhere scattered, nuclear \ca{} is $\calZ$-stable if and only if it is pure.~This has been verified in some cases, for example in \cite[Theorems 7.10, 7.15]{RobTik17NucDimNonSimple}.

Beyond the nuclear setting, pureness continues to play an important role.~For instance, reduced free group \ca{s} are pure \cite{AmrGaoElaPat24arX:StrCompRedGpCAlgs} although not $\calZ$-stable, showing that pureness can provide regularity where $\calZ$-stability does not apply.
Moreover, the solution to the rank problem in the stable rank one setting \cite{Thi20RksOps, AntPerRobThi22CuntzSR1} shows that for simple, nonelementary \ca{s} of stable rank one —and more generally for nowhere scattered \ca{s} of stable rank one— pureness is equivalent to strict comparison.


In this paper we establish another close connection between $\calZ$-stability and pureness.~For a free ultrafilter $\mathcal{U}$ on $\NN$, denote by $A_\mathcal{U}$ the ultrapower of $A$.~Our main result is:

\begin{thmIntro}[see \ref{prp:CharZStable}]
Let $A$ be a separable \ca{}. 
Then the following are equivalent:
\begin{enumerate}[{\rm (1)}]
    \item $A$ is $\calZ$-stable.
    \item The central sequence algebra $A' \cap A_\mathcal{U}$ is pure.
    \item Kirchberg’s central sequence algebra $F(A):=(A' \cap A_\mathcal{U})/(A^\perp \cap A_\mathcal{U})$ is pure.
\end{enumerate}
\end{thmIntro}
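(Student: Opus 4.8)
The plan is to reduce the three-way equivalence to a single assertion: \emph{for the always-unital algebra $B := F(A)$, one has that $B$ is pure if and only if there is a unital $*$-homomorphism $\mathcal{Z} \to B$.} The classical Toms--Winter and Kirchberg--R{\o}rdam characterisations of $\mathcal{Z}$-stability give, for separable $A$, that $A \cong A \otimes \mathcal{Z}$ is equivalent to the existence of a unital $*$-homomorphism $\mathcal{Z} \to F(A)$, which settles the equivalence of (1) and (3); the equivalence with (2) then follows from a lemma that pureness is compatible with the $\sigma$-ideal $A^\perp \cap A_\mathcal{U}$ relating $A' \cap A_\mathcal{U}$ to its quotient $F(A)$ (and when $A$ is unital the two sequence algebras simply coincide). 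Throughout I will use the structural features of $B = F(A)$: it inherits enough of the countable saturation of the ultrapower $A_\mathcal{U}$ that (i) an approximately compatible sequence of $*$-homomorphisms out of $B$ can be promoted to an honest $*$-homomorphism, and (ii) a unital embedding $\mathcal{Z} \hookrightarrow B$ can be perturbed to commute with any prescribed separable subalgebra of $B$.

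For the implication ``$\mathcal{Z}$ embeds unitally into $B \Rightarrow B$ is pure'': by (ii), for every separable $C \subseteq B$ there is a unital $*$-homomorphism $\mathcal{Z} \to B$ whose image commutes with $C$, so the C*-subalgebra generated by $C$ and this copy of $\mathcal{Z}$ is a separable $\mathcal{Z}$-stable algebra containing $C$, hence a pure algebra containing $C$. Thus $B$ is an inductive limit of an upward-directed family of pure separable subalgebras, and $B$ is pure by permanence of pureness under inductive limits. (When $A$ is already $\mathcal{Z}$-stable one may alternatively just note $F(A) \cong F(A \otimes \mathcal{Z})$ and read off the central copies of $\mathcal{Z}$ directly.)

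The substantial direction is ``$B$ pure $\Rightarrow \mathcal{Z}$ embeds unitally into $B$'', and here I would follow the Matui--Sato/Sato/Kirchberg--R{\o}rdam strategy while extracting every analytic input from pureness rather than from nuclearity. Applying almost divisibility of $\mathrm{Cu}(B)$ to $[1_B]$ gives, for each $n$, a class which is ``almost $\tfrac{1}{n}[1_B]$''; translating this, via strict comparison in $\mathrm{Cu}(B)$ and the usual passage from divisibility in the Cuntz semigroup to completely positive contractive order zero maps, produces order zero maps $\varphi_n \colon M_n \to B$ whose defects $1_B - \varphi_n(1_{M_n})$ are uniformly small against every functional on $\mathrm{Cu}(B)$. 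A Cuntz-semigroup version of property (SI) — available because $B$ is pure, so that a positive element which is uniformly small against every functional can be absorbed by any full positive element in the strong sense of (SI) — then lets one repair these defects and splice the $\varphi_n$ into an approximately compatible sequence of unital $*$-homomorphisms $Z_{p_k, q_k} \to B$ from prime dimension drop algebras exhausting $\mathcal{Z}$; by the saturation property (i) this sequence assembles into a unital $*$-homomorphism $\mathcal{Z} \to B$, so that $A$ is $\mathcal{Z}$-stable.

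The main obstacle is exactly this last (SI)-type step. In the classical arguments one controls small central sequences via nuclearity of $A$ (completely positive approximations together with averaging over traces), whereas here $A$ is only assumed separable; the content of the hypothesis is that pureness of the — non-separable, non-nuclear — algebra $B$ already encodes in its Cuntz semigroup the comparison and divisibility needed to absorb small elements into full ones, so that the (SI)-type conclusion can be obtained purely Cuntz-semigroup-theoretically. A secondary technical point is to confirm that ``pure'' is the right robust notion for these large algebras: that it is inherited by the relevant separable subalgebras and by inductive limits, and transfers across the $\sigma$-ideal $A^\perp \cap A_\mathcal{U}$ — which is where the permanence properties developed earlier in the paper are used.
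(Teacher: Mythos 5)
Your overall strategy---reduce everything to the existence of a unital \stHom{} $\calZ \to F(A)$, extract that map from pureness of $F(A)$ via dimension drop algebras, and get pureness back from $\calZ$-stability by writing the central sequence algebra as a limit of separable $\calZ$-stable (hence pure) subalgebras---is the paper's strategy. But the proposal has a genuine gap exactly where you flag the ``main obstacle'', and a second one in how you handle statement (2). On the first: no (SI)-type absorption of small central sequences is needed, and leaving it as an unresolved analytic step means the crux of the proof is missing. The point is that the hypothesis is pureness of $B = F(A)$ \emph{itself}, so $B$ has strict comparison; once almost divisibility of $[1_B]$ produces pairwise orthogonal, pairwise equivalent positive elements $a_1,\dots,a_n$ with $d_\tau\bigl(1-\sum_j a_j\bigr) \leq \tfrac{1}{2}\, d_\tau\bigl((a_1-\tfrac{\varepsilon}{2})_+\bigr)$ for every quasitrace $\tau$, strict comparison of $B$ immediately gives $1-\sum_j a_j \precsim (a_1-\tfrac{\varepsilon}{2})_+$, and the R{\o}rdam--Winter criterion yields a unital \stHom{} $Z_{n,n+1}\to B$. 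This is \cref{prp:Z23-to-Pure} (following Ng--Robert); nuclearity, traces on $A$, and control of small central sequences never enter. Your worry about (SI) imports the difficulty of the Toms--Winter conjecture (where only $A$ has good comparison) into a setting where it does not arise. Separately, your assembly of a single unital map $\calZ \to B$ from ``an approximately compatible sequence'' of maps $Z_{p_k,q_k}\to B$ is underspecified: mere existence of unital maps from each dimension drop algebra does not let you intertwine them with the connecting maps of an inductive system for $\calZ$. The paper instead uses Kirchberg's commuting-position result (\cref{prp:IntoCommutingPosition}) to get a unital \stHom{} $\bigotimes_{\mathrm{max}}^\infty Z_{2,3}\to F(A)$ and then Dadarlat--Toms' unital embedding $\calZ \to \bigotimes^\infty Z_{2,3}$; you mention the commuting-position tool but never connect it to this step. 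Finally, concluding $\calZ$-stability of $A$ from a unital map $\calZ\to F(A)$ for \emph{non-unital} separable $A$ is not classical boilerplate; it is the content of \cref{prp:CharDStable}(4)$\Rightarrow$(1), proved by adapting Nawata's argument and using $K_1$-injectivity of strongly self-absorbing algebras.

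On statement (2): there is no lemma transferring pureness from the quotient $F(A)$ back to $A'\cap A_\filter$ across the ideal $A^\perp\cap A_\filter$. Pureness passes \emph{to} quotients (a nontrivial recent theorem, used for (2)$\Rightarrow$(3)), but an extension with pure quotient need not be pure unless you also control the ideal, and the annihilator ideal is not known to be pure here. The correct route for (3)$\Rightarrow$(2) is through (1): once $A$ is $\calZ$-stable, $A'\cap A_\filter$ is separably $\calZ$-stable (\cref{prp:CharDStable2}), hence a directed union of separable $\calZ$-stable, hence pure, subalgebras, hence pure because pureness is separably determined (\cref{prp:SepZStablePure}). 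Your ``easy direction'' argument contains the right idea but is stated only for $F(A)$ and also overstates one step: a single unital copy of $\calZ$ commuting with a separable $C$ does not by itself make $C^*(C\cup\calZ)$ $\calZ$-stable; one must iterate and pass to the limit.
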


This shows that $\calZ$-stability can be characterized as \emph{central pureness}.~It also suggests that pureness may be interpreted as a noncentral analog of $\calZ$-stability.  

We also obtain a nonseparable generalization. Recall that a \ca{} $A$ is \emph{separably $\calZ$-stable} if any separable sub-\ca{} $B_0$ is contained in a separable, $\calZ$-stable sub-\ca{} $B$; see  \cref{dfn:SepDStable} and the comments preceding it.

\begin{thmIntro}[see \ref{prp:CharSepZStable}]
Let $A$ be a \ca{}. 
Then the following are equivalent:
\begin{enumerate}[{\rm (1)}]
    \item $A$ is separably $\calZ$-stable.
    \item The relative central sequence algebra $B' \cap A_\mathcal{U}$ is pure for every separable subalgebra $B \subseteq A_\mathcal{U}$.
    \item The quotient $(B' \cap A_\mathcal{U})/(B^\perp \cap A_\mathcal{U})$ is pure for every separable subalgebra $B \subseteq A_\mathcal{U}$.
\end{enumerate}
\end{thmIntro}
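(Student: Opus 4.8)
The plan is to bootstrap from the separable case, \cref{prp:CharZStable}, by means of standard separable-reduction machinery. I will use three soft facts. (a) A reindexing characterization of the relevant absorption property: $A$ is separably $\calZ$-stable if and only if for every separable $B \subseteq A_\filter$ there is a unital \stHom{} $\calZ \to B' \cap A_\filter$, equivalently a unital \stHom{} $\calZ \to (B' \cap A_\filter)/(B^\perp \cap A_\filter)$; this follows from Kirchberg's reindexing/$\sigma$-ideal formalism together with the definition of separable $\calZ$-stability. (b) Pureness is ``local'': if a \ca{} $D$ has a unital \stHom{} $\calZ \to D_0' \cap D$ for every separable subalgebra $D_0 \subseteq D$, then a back-and-forth construction exhibits $D$ as an inductive limit of separable $\calZ$-stable subalgebras; since $\calZ$-stable \ca{s} are pure and both almost unperforation and almost divisibility of the Cuntz semigroup pass to inductive limits, $D$ is pure. (c) \cref{prp:CharZStable} is ``relative'': its proof should in fact show that for any separable subalgebra $B$ of an ultrapower $A_\filter$, the algebra $B' \cap A_\filter$ — and likewise $(B' \cap A_\filter)/(B^\perp \cap A_\filter)$ — is pure if and only if it admits a unital \stHom{} from $\calZ$.

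Given these, (1) $\Rightarrow$ (2) and (1) $\Rightarrow$ (3) run as follows. Let $B \subseteq A_\filter$ be separable. Lifting a countable dense subset of $B$ to bounded sequences in $A$ and letting $C \subseteq A$ be the subalgebra generated by all their entries, we obtain $B \subseteq C_\filter \subseteq A_\filter$ with $C$ separable; since $A$ is separably $\calZ$-stable we may enlarge $C$, still separably, so that $C$ is $\calZ$-stable. Now, given any separable $E$ with $B \subseteq E \subseteq A_\filter$, the same reflection yields a separable $\calZ$-stable $C_1 \subseteq A$ with $C \subseteq C_1$ and $E \subseteq (C_1)_\filter$; as $C_1$ is separable and $\calZ$-stable, a standard reindexing argument produces a unital \stHom{} $\calZ \to E' \cap (C_1)_\filter \subseteq E' \cap A_\filter$. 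Applying this with $E = C^*(B, D_0)$ for an arbitrary separable $D_0 \subseteq B' \cap A_\filter$ shows that $B' \cap A_\filter$ has a unital copy of $\calZ$ in the relative commutant of every separable subalgebra; composing with the quotient map $B' \cap A_\filter \to (B' \cap A_\filter)/(B^\perp \cap A_\filter)$ gives the same for the quotient. By (b), both are pure.

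For (2) $\Rightarrow$ (1) and (3) $\Rightarrow$ (1), suppose that $B' \cap A_\filter$ (resp.\ $(B' \cap A_\filter)/(B^\perp \cap A_\filter)$) is pure for every separable $B \subseteq A_\filter$. By (c), each such algebra then admits a unital \stHom{} from $\calZ$; combined with (a) — whose two formulations are equivalent — this shows that $A$ is separably $\calZ$-stable, completing the cycle.

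The main obstacle is establishing the ``relative'' form (c): one has to go through the proof of \cref{prp:CharZStable} and confirm that it uses only separability of $B$ and the ambient algebra being an ultrapower (so that the usual $\varepsilon$-tests, $\sigma$-ideal arguments, and approximations inside $B' \cap A_\filter$ are available), rather than separability of $A$ itself. Two secondary points need care. In the passage involving (3), quasitraces need not restrict well along the quotient by the $\sigma$-ideal $B^\perp \cap A_\filter$, so it is cleaner to transport unital $\calZ$-embeddings — which behave functorially under that quotient — than to push strict comparison through it directly; correspondingly, the inductive-limit step in (b) should be carried out with the order-theoretic description of the Cuntz semigroup, since the relevant separable $\calZ$-stable subalgebras may carry quasitraces that do not extend. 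Finally, the back-and-forth construction in (b) requires the usual care to ensure that the copies of $\calZ$ adjoined at each stage embed the way one wants, so that the limit is genuinely $\calZ$-stable.
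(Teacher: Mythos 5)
Your overall strategy---characterize separable $\calZ$-stability by the existence of unital \stHom{s} $\calZ \to (B' \cap A_\filter)/(B^\perp \cap A_\filter)$ for all separable $B$, and extract such maps from pureness via $Z_{2,3}$ and Dadarlat--Toms---is the paper's strategy. But your step (c) is a genuine gap, not merely ``the main obstacle to be checked''. The proof of \cref{prp:CharZStable}, implication (3)$\Rightarrow$(1), upgrades a single unital copy of $Z_{2,3}$ in $F(A)$ to infinitely many commuting copies by invoking Kirchberg's \cref{prp:IntoCommutingPosition}, which is stated and used only for $F(A)$ with $A$ separable; rereading that proof does not produce a ``relative'' version for $(B' \cap A_\filter)/(B^\perp \cap A_\filter)$ with $A$ nonseparable, and the paper never proves one. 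What the paper does instead is exploit exactly the quantifier you discard by fixing $B$: given a separable $C \subseteq Q := (B' \cap A_\filter)/(B^\perp \cap A_\filter)$, it forms the separable subalgebra $E$ generated by $B$ and a preimage of $C$, applies hypothesis (3) to $E$ to obtain a unital $Z_{2,3} \to (E' \cap A_\filter)/(E^\perp \cap A_\filter)$ via \cref{prp:PureDimDrop}, and maps this into $C' \cap Q$. This shows $Q$ is $Z_{2,3}$-saturated, whence commuting copies of $Z_{2,3}$ and then a unital $\calZ \to Q$ exist, verifying condition (5) of \cref{prp:CharSepDStable}. Your fixed-$B$ biconditional is precisely the statement that needs either this use of the full hypothesis or an unproved relative Kirchberg saturation result.

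A second, smaller defect: when $A$ is not unital, $A_\filter$ and hence $B' \cap A_\filter$ need not be unital, so ``unital \stHom{} $\calZ \to B' \cap A_\filter$'' and the $\calZ$-saturation of $B' \cap A_\filter$ appearing in your (a), (b) and in your argument for (1)$\Rightarrow$(2) do not make sense as stated. The paper keeps all unital maps valued in the quotients $(B' \cap A_\filter)/(B^\perp \cap A_\filter)$, which are unital for separable $B$, and obtains (1)$\Rightarrow$(2) without any units: the implication (1)$\Rightarrow$(3) of \cref{prp:CharSepDStable} shows that $B' \cap A_\filter$ is separably $\calZ$-stable (using the Farah--Szab\'{o} permanence properties of \cref{prp:PermanenceSepDStable} applied to the extension $0 \to B^\perp \cap A_\filter \to B' \cap A_\filter \to Q \to 0$), and \cref{prp:SepZStablePure} then gives pureness.
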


Along the way (\cref{sec:DAbs}), we collect and streamline a number of well-known results on absorption of strongly self-absorbing \ca{s}, some of which had not previously appeared in the literature in full generality.~In the final section (\cref{sec:CentralDiv}), we investigate divisibility in central sequence algebras.~Our results show that good central comparison follows from good central divisibility, so that $\calZ$-stability is already detected by central divisibility properties.~In fact, for a \ca{} to be $\calZ$-stable it suffices for its central sequence algebra to satisfy $n$-almost divisibility for some fixed $n$ (\cref{prp:CentralDiv}). It remains open whether even weaker divisibility properties, such as functional divisibility, are enough (\cref{qst:CentralFuncDiv}).

\section{Absorption of a strongly self-absorbing \texorpdfstring{$C^*$}{C*}-algebra}
\label{sec:DAbs}

In this section, we present basic results that characterize when a separable \ca{} tensorially absorbs a strongly self-absorbing \ca{} $\calD$; 
see \cref{prp:CharDStable}.
These results are well-known, but certain aspects seem to not have appeared in the literature so far, or only under additional assumptions like unitality. We also give characterizations of separable $\calD$-stability in the not necessarily separable case; see \cref{prp:CharSepDStable}.


Throughout the paper, $\filter$ denotes a free ultrafilter on $\NN$.
Given a \ca{}~$A$, we use $\prod_\NN A$ to denote the \ca{} of bounded sequences in $A$, and the \emph{ultrapower} of $A$, denoted by $A_\filter$, is defined as the quotient $\prod_\NN A / c_\filter(A)$, where $c_\filter(A)$ is the closed, two-sided ideal given by
\[
c_\filter(A) := \left\{ (a_n)_n \in \prod_\NN A : \lim_{n\to\filter}\|a_n\|=0 \right\}.
\]
We view $A$ as a subalgebra of $A_\filter$ via the natural embedding $A \to A_\filter$ given by mapping an element $a \in A$ to the image of the constant sequence $(a,a,\ldots)\in\prod_\NN A$ in~$A_\filter$.
The \emph{central sequence algebra} of $A$ is defined as the commutant of $A$ inside~$A_\filter$, denoted by $A'\cap A_\filter$.
Note that the image of a sequence $(a_n)_n \in \prod_\NN A$ in $A_\filter$ belongs to $A'\cap A_\filter$ if and only if 
\[
\lim_{n\to\filter} \| a_nb - ba_n \| = 0
\]
for every $b \in A$.

The \emph{annihilator} of $A$ inside $A_\filter$ (denoted by $A^\perp \cap A_\filter$) is the set of elements $b\in A_\filter$ satisfying $bA=\{0\}=Ab$.
This is a closed, two-sided ideal in $A' \cap A_\filter$, and \emph{Kirchberg's central sequence algebra} is $F(A) := (A' \cap A_\filter) / (A^\perp \cap A_\filter)$; 
see \cite[Definition~1.1]{Kir06CentralSeqPI}.
To stress the difference between $F(A)$ and $A' \cap A_\filter$, we sometimes refer to the latter as the \emph{uncorrected central sequence algebra}.

If $A$ is unital, then $A^\perp \cap A_\filter=\{0\}$ and we get $F(A) = A' \cap A_\filter$, but in general~$F(A)$ is a proper quotient of $A' \cap A_\filter$.
If $A$ is $\sigma$-unital, then $F(A)$ is unital.

A unital, separable \ca{} $\mathcal{D}$ is \emph{strongly self-absorbing} if $\calD \neq \CC$ and there is a $\ast$-isomorphism $\calD \to \calD \otimes \calD$ which is approximately unitarily equivalent to $\id_\calD \otimes 1_\calD$;
see \cite[Definition~1.3]{TomWin07ssa}.
It is known that strongly self-absorbing \ca{s} are automatically simple and nuclear, and the only known examples are the Jiang-Su algebra $\calZ$, the Cuntz algebras~$\calO_2$ and $\calO_\infty$, UHF-algebras of infinite type, and the tensor product of $\calO_\infty$ with an UHF-algebra of infinite type. For $\calD$ strongly self-absorbing, a \ca{} $A$ is said to be \emph{$\calD$-stable} provided $A \cong A \otimes \calD$.

The next result is folklore. 
Its converse holds whenever $A$ is separable;
see \cref{prp:CharDStable}.

\begin{lma}
\label{prp:DStable-Nonsep}
Let $\calD$ be a strongly self-absorbing \ca, and let $A$ be a (not necessarily separable) unital \ca{} that is $\calD$-stable.
Then there exists a unital embedding $\calD \to A' \cap A_\filter$.
\end{lma}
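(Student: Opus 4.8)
The plan is to exploit the infinite tensor-product structure available for $\calD$-stable algebras, using tensor-shift embeddings that are automatically asymptotically central against \emph{every} fixed element of $A$, so that no separability of $A$ (and in particular no countable exhaustion of $A$) is required. First I would upgrade the hypothesis $A \cong A \otimes \calD$ to $A \cong A \otimes \calD^{\otimes\infty}$, where $\calD^{\otimes\infty} = \bigotimes_{n\in\NN}\calD$ denotes the infinite minimal tensor product. This rests on the standard structural fact that a strongly self-absorbing \ca{} satisfies $\calD \cong \calD^{\otimes\infty}$, whence $A \cong A \otimes \calD \cong A \otimes \calD^{\otimes\infty}$. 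Fixing such an isomorphism, I identify $A$ with $A \otimes \calD^{\otimes\infty}$, and I recall that $A \otimes \calD^{\otimes\infty} = \overline{\bigcup_m A \otimes \calD^{\otimes m}}$, where $A \otimes \calD^{\otimes m}$ is identified with its image under the inclusion into the first $m$ tensor factors.

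Next, for each $n \in \NN$ I would define a unital \stHom{} $\psi_n\colon\calD\to A = A\otimes\calD^{\otimes\infty}$ placing $\calD$ in the $n$-th tensor factor, concretely $\psi_n(d) = 1_A\otimes 1\otimes\cdots\otimes 1\otimes d\otimes 1\otimes\cdots$ with $d$ in position $n$ and units elsewhere. Each $\psi_n$ is unital and contractive. The crucial point is asymptotic centrality: given $a\in A$, $d\in\calD$ and $\varepsilon>0$, choose $m$ and $a'$ in the image of $A\otimes\calD^{\otimes m}$ with $\|a-a'\|<\varepsilon$. For every $n>m$ the element $\psi_n(d)$ lies in a tensor factor disjoint from those supporting $a'$, hence commutes with $a'$, so
\[
\|\psi_n(d)a - a\psi_n(d)\| = \|[\psi_n(d),a-a']\| \le 2\|a-a'\|\,\|d\| < 2\varepsilon\|d\|.
\]
Thus $\|[\psi_n(d),a]\|\to 0$ as $n\to\infty$, for each fixed $a\in A$ and $d\in\calD$. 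This estimate involves only one fixed element $a$ at a time, so it is insensitive to the cardinality of $A$.

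Finally I would assemble these into a single map. The coordinatewise assignment $d \mapsto (\psi_n(d))_n$ is a unital \stHom{} $\calD \to \prod_\NN A$, and composing with the quotient map $\prod_\NN A \to A_\filter$ yields a unital \stHom{} $\Psi\colon\calD\to A_\filter$. The asymptotic centrality gives $\lim_{n\to\filter}\|[\psi_n(d),a]\|=0$ for every $a\in A$, so each $\Psi(d)$ commutes with the image of $A$ in $A_\filter$; that is, $\Psi(\calD)\subseteq A'\cap A_\filter$. Since $\calD$ is simple (being strongly self-absorbing) and $\Psi$ is unital, hence nonzero, $\Psi$ is injective. This produces the desired unital embedding $\calD\to A'\cap A_\filter$.

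The only nonroutine ingredient is the passage $A\cong A\otimes\calD^{\otimes\infty}$, which I would invoke as a known structural property of strongly self-absorbing \ca{s}; everything else is a direct computation. I expect the main conceptual obstacle to be exactly the nonseparable case: the familiar argument builds approximately central unital embeddings indexed by a countable dense subset of $A$, which is unavailable here. The resolution above sidesteps this by indexing the embeddings by the tensor position $n$ rather than by an exhaustion of $A$, so that asymptotic centrality holds against each fixed element automatically, with no countability assumption on $A$.
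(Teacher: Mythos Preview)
Your proposal is correct and follows essentially the same approach as the paper: both identify $A$ with $A\otimes\calD^{\otimes\infty}$ and use the tensor-shift embeddings $d\mapsto 1_A\otimes 1\otimes\cdots\otimes d\otimes 1\otimes\cdots$ to produce an asymptotically central sequence of unital \stHom{s} $\calD\to A$. Your write-up is more explicit (the asymptotic centrality estimate and the injectivity argument via simplicity of $\calD$), but the underlying idea is identical.
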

\begin{proof}
The sequence of unital embeddings
\[
\varphi_n \colon \calD \to A \otimes \calD \otimes \calD \otimes \cdots, \quad
d \mapsto 1_A \otimes 1_\calD \otimes \cdots_n \otimes 1_\calD \otimes d \otimes 1_\calD \otimes \cdots
\]
is approximately central in the sense that 
\[
\lim_{n\to\infty} \| \varphi_n(d)a - a\varphi_n(d) \| = 0.
\]
for every $d \in \calD$ and $a \in A \otimes \calD \otimes \calD \otimes \cdots$.
Using that
\[
A 
\cong A \otimes \calD
\cong A \otimes \calD \otimes \calD \otimes \cdots,
\]
the sequence $(\varphi_n)_n$ induces a unital embedding $\calD \to A' \cap A_\filter$.
\end{proof}

The next result is well-known to experts.~The equivalence of~(1) and~(3) was shown in \cite[Theorem~2.2]{TomWin07ssa} and is based on R{\o}rdam's approximate intertwining theorem \cite[Theorem~7.2.2]{Ror02Classification}; 
see also \cite[Theorem~2.5]{KirRor15CentralSeqCharacters}.
To the best of our knowledge, the equivalence of~(1) and~(4) for non-unital \ca{s} has only appeared in the literature for tensorial absorption of the Jiang-Su algebra in \cite[Proposition~5.1]{Naw12PicardGpStablyProjlessCAlg}, and we adapt the argument of Nawata to obtain the general result.
A crucial ingredient is Winter's result that strongly self-absorbing \ca{s} are $K_1$-injective \cite{Win11ssaZstable};
see also Kirchberg's result on tensorial absorption \cite[Proposition~4.11]{Kir06CentralSeqPI}. For a \ca{} $A$, we denote by $M(A)$ its multiplier algebra.

\begin{thm}
\label{prp:CharDStable}
Let $A$ be a separable \ca, and let $\calD$ be a strongly self-absorbing \ca.
Then the following are equivalent:
\begin{enumerate}[{\rm (1)}]
\item
The \ca{} $A$ is $\calD$-stable.
\item
There is a $\calD$-stable \ca{} $B$ with $A \subseteq B \subseteq M(A)$ and $1_{M(A)} \in B$.
\item
There is a unital \stHom{} $\calD \to A' \cap M(A)_\filter$.
\item
There is a unital \stHom{} $\calD \to F(A)$.
\end{enumerate}
\end{thm}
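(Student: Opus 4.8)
The plan is to prove the cycle of implications $(1)\Rightarrow(2)\Rightarrow(3)\Rightarrow(4)\Rightarrow(1)$, with $(1)\Rightarrow(2)$ being essentially trivial (take $B=M(A)$, noting that $M(A)$ is $\calD$-stable whenever $A$ is, by \cite[Corollary~3.2]{TomWin07ssa} or directly since $M(A\otimes\calD)\cong M(A)\otimes\calD$ when $\calD$ is separable and unital) and $(2)\Rightarrow(3)$ following from \cref{prp:DStable-Nonsep} applied to the unital, $\calD$-stable algebra $B$, together with the observation that $B'\cap B_\filter$ maps into $A'\cap M(A)_\filter$: since $A\subseteq B$ and $B$ acts nondegenerately, an approximately central sequence for $B$ is in particular approximately central for $A$, and sits inside $M(A)_\filter$ because $B\subseteq M(A)$. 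The implication $(3)\Rightarrow(4)$ requires producing, from a unital \stHom{} $\calD\to A'\cap M(A)_\filter$, a unital \stHom{} $\calD\to F(A)=(A'\cap A_\filter)/(A^\perp\cap A_\filter)$; here one uses that $F(A)$ can equivalently be computed as $(A'\cap M(A)_\filter)/(\mathrm{Ann})$ for the appropriate annihilator ideal (this is the content of Kirchberg's identification, \cite[Proposition~1.9]{Kir06CentralSeqPI}), so the map is obtained simply by composing with the quotient, and unitality is preserved since $A$ is $\sigma$-unital so $F(A)$ is unital and the image of $1_\calD$ is the unit.

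The substance of the theorem is the implication $(4)\Rightarrow(1)$, and this is where I expect the real work to lie. The strategy is the one pioneered by Nawata \cite[Proposition~5.1]{Naw12PicardGpStablyProjlessCAlg} for $\calD=\calZ$, adapted to general strongly self-absorbing $\calD$. Given a unital \stHom{} $\varphi\colon\calD\to F(A)$, I would first lift it to a suitable order-zero-type or completely positive contractive map, then use a reindexation/diagonal-sequence argument (à la R{\o}rdam's approximate intertwining, \cite[Theorem~7.2.2]{Ror02Classification}) to build an approximately central sequence of \stHom{s} $\calD\to M(A)$ — not just into the sequence algebra — that approximately commute with $A$ and are approximately unital on an approximate unit of $A$. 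The crucial technical input is that $\calD$ is $K_1$-injective \cite{Win11ssaZstable}: this is what allows one to upgrade approximate unitary equivalence between the relevant embeddings into honest unitaries in the corner, which is needed to run the intertwining and conclude $A\cong A\otimes\calD$. One must be careful that $A$ is only $\sigma$-unital, not unital, so the unitaries implementing the intertwining live in $M(A)$ (or in unitizations of hereditary subalgebras), and one has to check that the construction stays inside $A$ when testing against elements of $A$; this bookkeeping, together with the passage from $F(A)$ back to $A'\cap A_\filter$ modulo the annihilator, is the main obstacle.

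Concretely, for the hard implication I would proceed as follows. Fix a countable dense subset $(a_k)_k$ of $A$, an increasing approximate unit $(e_m)_m$ of $A$, and a finite generating set $F\subseteq\calD$. From $\varphi\colon\calD\to F(A)$ choose a completely positive contractive lift $\psi\colon\calD\to A'\cap A_\filter$ of $\varphi$ composed with the quotient map (using the Choi–Effros lifting theorem, valid since $\calD$ is nuclear), and then represent $\psi$ by a sequence of completely positive contractive maps $\psi_n\colon\calD\to A$ with the properties that $\|\psi_n(xy)-\psi_n(x)\psi_n(y)\|\to_\filter 0$, $\|\psi_n(x)a_k-a_k\psi_n(x)\|\to_\filter 0$, and $\|\psi_n(x)e_m-\psi_n(x)\|\to_\filter 0$ for all relevant $x,k,m$ (the last because $\varphi$ is unital in the unital algebra $F(A)$). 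A standard diagonal argument over an increasing sequence of finite tolerances then produces, after reindexing, an asymptotically multiplicative, asymptotically central, asymptotically unital sequence; perturbing to exact \stHom{s} $\theta_n\colon\calD\to A$ (possible by semiprojectivity-type or Choi–Effros perturbation arguments for the relevant $\calD$, or more robustly by Kirchberg's criterion \cite[Proposition~4.11]{Kir06CentralSeqPI}) and invoking $K_1$-injectivity of $\calD$ to implement approximate unitary equivalences by unitaries in $M(A)$, one runs R{\o}rdam's approximate intertwining between the identity inclusion $A\hookrightarrow A$ and the maps $a\mapsto\theta_n(1_\calD)^{1/2}\cdots$ built from the $\theta_n$, yielding in the limit a \stIso{} $A\xrightarrow{\ \cong\ }A\otimes\calD$. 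This completes the cycle.
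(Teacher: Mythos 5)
Your step (1)$\Rightarrow$(2) contains a genuine error. You propose taking $B=M(A)$, justified by the claim that $M(A\otimes\calD)\cong M(A)\otimes\calD$; this isomorphism is false for non-unital $A$ (already for $A=\Cpct$ the algebra $M(\Cpct\otimes\calD)$ of adjointable operators on the Hilbert module $H\otimes\calD$ strictly contains $B(H)\otimes\calD$). More to the point, $M(A)$ is nonseparable and need not be $\calD$-stable when $A$ is --- the correct statement in the nonseparable setting is only \emph{separable} $\calD$-stability of $M(A)$ (this is exactly Theorem~B of \cite{FarSza24arX:CoronaSSA}, quoted later in the paper), and genuine $\calD$-stability fails for natural nonseparable algebras such as ultrapowers of $\calD$. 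The paper's proof instead uses the universal property of multiplier algebras to embed $M(A)\otimes\calD$ unitally into $M(A\otimes\calD)\cong M(A)$ and takes $B$ to be the preimage of that copy; this $B$ is honestly $\calD$-stable because $M(A)\otimes\calD\cong M(A)\otimes\calD\otimes\calD$, and it contains $A$ because $A\otimes\calD$ sits inside it. Your algebra $B$ must be chosen this way; $M(A)$ itself does not do the job.

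For (4)$\Rightarrow$(1) your strategy also has a gap that would be hard to repair as stated: you propose to perturb the asymptotically multiplicative c.p.c.\ maps $\psi_n\colon\calD\to A$ to honest \stHom{s} $\theta_n\colon\calD\to A$ by ``semiprojectivity-type'' arguments, but none of the relevant $\calD$ (the Jiang--Su algebra, infinite-type UHF algebras) is semiprojective, so no such perturbation exists; and the intertwining maps you would feed into R{\o}rdam's theorem are left unspecified. The paper avoids producing any maps into $A$ at all: it forms the corrected relative central sequence algebra $C$ of $A\otimes 1$ inside $M(A\otimes\calD)_\filter$, exhibits two commuting unital copies $\alpha,\beta$ of $\calD$ in $C$ (one from the hypothesis via the identification $F(A)\cong(A'\cap M(A)_\filter)/(A^\perp\cap M(A)_\filter)$, one from $1\otimes\calD$), applies the approximately inner flip of $\calD\otimes\calD$ with $K_1$-trivial unitaries --- which lie in $\calU_0$ by $K_1$-injectivity and hence lift to unitaries $u_n$ in $(A\otimes 1)'\cap M(A\otimes\calD)_\filter$ --- and checks that $u_n(a\otimes d)u_n^*$ converges into $(A\otimes 1)_\filter$, so that the one-sided intertwining \cite[Proposition~7.2.1]{Ror02Classification} applied to the inclusion $A\otimes 1\subseteq A\otimes\calD$ yields $A\cong A\otimes\calD$. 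Your instinct about where $K_1$-injectivity enters is right (it makes the flip unitaries liftable), but the surrounding construction needs to be the flip-and-conjugate argument above rather than a perturbation of lifts. Steps (2)$\Rightarrow$(3) and (3)$\Rightarrow$(4) of your proposal are essentially the paper's.
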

\begin{proof}
We show the implications `(1)$\Rightarrow$(2)$\Rightarrow$(3)$\Rightarrow$(4)$\Rightarrow$(1)'.

(1)$\Rightarrow$(2):
Assume that $A$ is $\calD$-stable, and fix a \stIso{} $\varphi \colon A \to A \otimes \calD$.
This induces a \stIso{} $M(\varphi) \colon M(A) \to M(A \otimes \calD)$.
Let $\iota \colon A \to M(A)$ and $\iota' \colon A \otimes \calD \to M(A \otimes \calD)$ denote the natural inclusions.
Then $\iota\otimes\id_\calD$ identifies $A \otimes \calD$ with an essential, closed ideal in $M(A) \otimes \calD$.
By the universal property of multiplier algebras, we obtain a (unique) unital, injective \stHom{} $\psi \colon M(A) \otimes \calD \to M(A \otimes \calD)$ such that $\iota' = \psi\circ(\iota\otimes\id_\calD)$.
The situation is shown in the following commutative diagram:

\[
\xymatrix{
A \ar[rr]^{\iota} \ar[d]^{\varphi}_{\cong}
& & M(A) \ar[d]^{M(\varphi)}_{\cong} \\
A \otimes \calD \ar[r]^-{\iota\otimes\id_\calD}
& M(A) \otimes \calD \ar[r]^-{\psi}
& M(A \otimes \calD).
}
\]
The desired algebra is $B:= M(\varphi)^{-1}(\psi(M(A)\otimes\calD))$.


(2)$\Rightarrow$(3):
Let $B$ be a $\calD$-stable \ca{} with $A \subseteq B \subseteq M(A)$ and $1_{M(A)} \in B$.
Applying \cref{prp:DStable-Nonsep}, we get a unital \stHom{} $\calD \to B' \cap B_\filter$.
Composing with the natural inclusion $A' \cap B_\filter \subseteq A' \cap M(A)_\filter$, we obtain the desired map
\[
\calD \to A' \cap B_\filter
\subseteq A' \cap M(A)_\filter.
\]


(3)$\Rightarrow$(4):
Let $(e_n)_n$ be a contractive, approximate identity in $A$.
This induces a (well-defined) contractive, linear map $M(A)_\filter \to A_\filter$ by mapping $[(x_n)_n] \in M(A)_\filter$ to $[(e_nx_n)_n] \in A_\filter$, which restricts to a map $\pi_0 \colon A' \cap M(A)_\filter \to A' \cap A_\filter$.

While $\pi_0$ is not necessarily multiplicative, we have $\pi_0(xy)-\pi_0(x)\pi_0(y) \in A^\perp$ for all $x,y \in A' \cap M(A)_\filter$.
It follows that the composition of $\pi_0$ with the quotient map $A' \cap A_\filter \to F(A)$ 
is a \stHom{} $\pi\colon A' \cap M(A)_\filter \to F(A)$.
One can show that $\pi$ is surjective, and moreover independent of the choice of a contractive, approximate identity in~$A$.

Now, if there is a unital \stHom{} $\calD \to A' \cap M(A)_\filter$, then by composing with $\pi$ we obtain a unital \stHom{} $\calD \to F(A)$.


(4)$\Rightarrow$(1):
By assumption, there exists a unital \stHom{} $\alpha\colon \calD \to F(A)$.
To verify $A \cong A\otimes\calD$, we consider the subalgebra $A \otimes 1$ of $M(A\otimes\calD)$, which induces embeddings $A\otimes 1 \subseteq (A \otimes 1)_\filter \subseteq M(A\otimes\calD)_\filter$.
We will show that there exists a sequence of unitaries $(u_n)_n$ in $(A\otimes 1)' \cap M(A\otimes\calD)_\filter$ such that 
\[
\lim_{n\to\infty} \dist\big( u_nbu_n^*, (A \otimes 1)_\filter \big) = 0
\]
for all $b \in A\otimes\calD \subseteq M(A\otimes\calD) \subseteq M(A\otimes\calD)_\filter$.
It then follows from R{\o}rdam's approximate intertwining theorem \cite[Proposition~7.2.1]{Ror02Classification} that $A \cong A\otimes\calD$.

To find the unitaries in $(A\otimes 1)' \cap M(A\otimes\calD)_\filter$, we consider the quotient
\[
C := \left( \big( A\otimes 1 \big)' \cap M(A\otimes\calD)_\filter \right) / \left( \big( A\otimes 1 \big)^\perp \cap M(A\otimes\calD)_\filter \right),
\]
and we let $\pi\colon ( A\otimes 1 )' \cap M(A\otimes\calD)_\filter \to C$ denote the quotient map.
The subalgebra $1 \otimes \calD \subseteq M(A\otimes\calD) \subseteq M(A\otimes\calD)_\filter$ lies in the commutant of $A\otimes 1$, which defines a unital \stHom{}
\[
\beta_0 \colon \calD \xrightarrow{\cong} 1 \otimes \calD \to (A \otimes 1)' \cap M(A\otimes\calD)_\filter.
\]
Composed with $\pi$, we obtain a unital \stHom{} $\beta := \pi\circ\beta_0 \colon \calD \to C$.

We can also interpret $\alpha$ as a map with codomain $C$.~Specifically, denote by $\tilde{\pi}\colon A' \cap M(A)_\filter \to F(A)$ the surjective \stHom{} constructed in the proof of the implication `(3)$\Rightarrow$(4)'.
One can show that the kernel of $\tilde{\pi}$ is $A^\perp \cap M(A)_\filter$, which establishes a natural \stIso{}
\[ 
F(A) \cong (A' \cap M(A)_\filter) / (A^\perp \cap M(A)_\filter).
\]
This allows us to view $F(A)$ as a natural subalgebra of $C$ and, via this identification, we have $\alpha\colon D\to C$.
The situation is shown in the following diagram:
\[
\small
\xymatrix@R15pt@C-15pt{ 
& & \calD \ar[d]^{\alpha} \\
(A \otimes 1)' \cap (A \otimes 1)_\filter \ar[r] \ar@{^{(}->}[d]
& \big( (A\otimes 1)' \cap (A \otimes 1)_\filter \big) / \big( (A\otimes 1)^\perp \cap (A \otimes 1)_\filter \big) \ar@{^{(}->}[d] \ar@{}[r]|<<<{\cong}
& F(A) \\
(A \otimes 1)' \cap M(A\otimes\calD)_\filter \ar@{->>}[r]^<<<{\pi}
& \big( (A\otimes 1)' \cap M(A\otimes\calD)_\filter \big) / \big( (A\otimes 1)^\perp \cap M(A\otimes\calD)_\filter \big) \ar@{}[r]|<<<{=:}
& C \\
\calD \cong 1 \otimes \calD \ar[u]^{\beta_0} \ar[ur]_{\beta}
}
\]

One verifies that the maps $\alpha,\beta\colon \calD \to C$ have commuting images, and since $\calD$ is nuclear we obtain a unital \stHom{} $\alpha\otimes\beta\colon \calD\otimes\calD \to C$.
It follows from the definition of strong self-absorption that there is a sequence $(w_n)_n$ of unitaries in $\calD\otimes\calD$ such that $\lim_{n\to\infty} w_n(x \otimes y)w_n^* = y \otimes x$ for all $x,y \in \calD$ (one says that $\calD$ has approximately inner flip).
By \cite[Proposition~1.13]{TomWin07ssa}, the unitaries $w_n$ may be chosen to be trivial in $K_1(\calD\otimes\calD)$.
By \cite[Theorem~3.1]{Win11ssaZstable}, every strongly self-absorbing \ca{} is itself $\calZ$-stable, and therefore it is $K_1$-injective.~This implies that the unitaries $w_n$ lie in $\calU_0(\calD\otimes\calD)$, the connected component of the unit in the unitary group.

Setting $v_n := (\alpha\otimes\beta)(w_n)$, we thus have a sequence $(v_n)_n$ of unitaries in $\calU_0(C)$ such that
\[
\lim_{n\to\infty} v_n \alpha(x)\beta(y) v_n^* = \beta(y) \alpha(x)
\]
for all $x,y \in \calD$.
Since unitaries in the connected component of the identity can be lifted along quotient maps, we can find unitaries $u_n$ in $(A\otimes 1)' \cap M(A\otimes\calD)_\filter$ with $\pi(u_n)=v_n$.

To show that $(u_n)_n$ have the desired properties, let $a \in A$ and $d \in \calD$.
Since $\alpha(\calD) \subseteq \pi( (A \otimes 1)' \cap (A \otimes 1)_\filter )$, we can pick $x \in (A \otimes 1)' \cap (A \otimes 1)_\filter$ such that $\pi(x)=\alpha(d)$.
We then have
\[
\pi\big( u_n\beta_0(d)u_n^* \big)
= v_n\beta(d)v_n^*
\to \alpha(d)
= \pi(x).
\]
Since the kernel of $\pi$ is the annihilator of $A\otimes 1$, we deduce that
\[
u_n(a \otimes d)u_n^*
= u_n(a \otimes 1)\beta_0(d)u_n^*
= (a \otimes 1)u_n\beta_0(d)u_n^* 
\to (a \otimes 1)x
\in (A \otimes 1)_\filter .
\]
This shows that $\lim_{n\to\infty} \dist\big( u_nbu_n^*, (A \otimes 1)_\filter \big) = 0$ for every elementary tensor $b=a\otimes d$ in $A\otimes\calD$, and consequently for every $b \in A\otimes\calD$.
\end{proof}

While the definition of $\calD$-stability makes sense for arbitrary \ca{s}, it is less useful for nonseparable \ca{s} since even natural candidates such as the ultrapower of $\calD$ fail to be $\calD$-stable.
Instead, the model-theoretically natural concept is to require that `sufficiently many' separable sub-\ca{s} are $\calD$-stable.
Variations of this concept have appeared under different names in the literature. 
We follow the terminology of Schafhauser \cite[Definition~1.4]{Sch20SubSimpAF};
see also \cite[Proposition~1.6]{FarSza24arX:CoronaSSA}.

\begin{dfn}
\label{dfn:SepDStable}
Let $\calD$ be a strongly self-absorbing \ca.
A \ca{} $A$ is said to be \emph{separably $\calD$-stable} if for every separable sub-\ca{} $B_0 \subseteq A$ there exists a separable sub-\ca{} $B \subseteq A$ that is $\calD$-stable and such that $B_0 \subseteq B$.
\end{dfn}

We use $\Sep(A)$ to denote the collection of separable sub-\ca{s} of a given \ca{} $A$. 
Restricting the model-theoretic definition to our case of interest, a family $F \subseteq \Sep(A)$ is a \emph{club} if it is \emph{cofinal} (that is, for every $B_0 \in \Sep(A)$ there exists $B\in F$ such that $B_0 \subseteq B$) and \emph{$\sigma$-complete} (that is, for every countable directed subset $F'\subseteq F$ we have that $\overline{\bigcup_{B \in F'}B} \in F$).

Let $\calD$ be a strongly self-absorbing \ca{}.
Using that $\calD$-stability passes to sequential inductive limits of separable \ca{s} \cite[Corollary~3.4]{TomWin07ssa}, we see that a \ca{} $A$ is separably $\calD$-stable if and only if the separable, $\calD$-stable sub-\ca{s} of $A$ form a club in $\Sep(A)$.

Toms and Winter \cite[Section~3]{TomWin07ssa} proved permanence properties of $\calD$-stability among separable \ca{s}, in particular passage to quotients, to hereditary sub-\ca{s}, and to extensions.
Farah and Szabó \cite{FarSza24arX:CoronaSSA} showed that analogous permanence properties hold for separable $\calD$-stability, which we record for later use.

\begin{lma}
\label{prp:PermanenceSepDStable}
Let $\calD$ be a strongly self-absorbing \ca.~The following statements hold:
\begin{enumerate}[{\rm (1)}]
\item
If $0 \to I \to E \to B \to 0$ is an extension of \ca{s}, then $E$ is separably $\calD$-stable if and only if $I$ and $B$ are separably $\calD$-stable.
\item
If $A$ is separably $\calD$-stable, and $B \subseteq A$ is a hereditary sub-\ca{}, then~$B$ is separably $\calD$-stable.
\end{enumerate}
\end{lma}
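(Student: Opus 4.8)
The plan is to deduce both statements from the permanence properties of $\calD$-stability for \emph{separable} \ca{s} recalled above --- passage to quotients, to hereditary sub-\ca{s}, to extensions \cite[Section~3]{TomWin07ssa}, and to sequential inductive limits \cite[Corollary~3.4]{TomWin07ssa} --- combined with an exhaustion argument inside $A$ (resp.\ $E$). The only purely algebraic ingredient I need is the observation that if $B\subseteq A$ is a hereditary sub-\ca{} and $A'\subseteq A$ is \emph{any} sub-\ca{}, then $A'\cap B$ is a hereditary sub-\ca{} of $A'$: if $x\in(A'\cap B)_+$ and $y\in A'_+$ satisfy $y\le x$, then heredity of $B$ in $A$ gives $y\in B$, so $y\in A'\cap B$. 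Note that $A'\cap B$ need not be hereditary in $B$, but this is irrelevant for the definition of separable $\calD$-stability.

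Granting this, part~(2) is immediate: given a separable $B_0\subseteq B$, I would use separable $\calD$-stability of $A$ to find a separable, $\calD$-stable $A'$ with $B_0\subseteq A'\subseteq A$; then $A'\cap B$ is separable and hereditary in $A'$, hence $\calD$-stable by \cite[Section~3]{TomWin07ssa}, and $B_0\subseteq A'\cap B\subseteq B$. Since ideals are hereditary, the same argument shows that in part~(1) the ideal $I$ is separably $\calD$-stable whenever $E$ is. For the quotient $B$ in part~(1), let $q\colon E\to B$ be the quotient map; given a separable $B_0\subseteq B$, I would lift a countable dense subset of $B_0$ to $E$, enlarge the sub-\ca{} it generates to a separable, $\calD$-stable $E'\subseteq E$, and note that $q(E')$ is then separable, $\calD$-stable (a quotient of $E'$), and contains $B_0$.

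The substantive part is the converse in~(1): assuming $I$ and $B$ are separably $\calD$-stable, and given a separable $E_0\subseteq E$, I must produce a separable, $\calD$-stable $E'$ with $E_0\subseteq E'\subseteq E$. I plan a back-and-forth construction of an increasing chain $E_0\subseteq E_1\subseteq\cdots$ in $\Sep(E)$, together with separable, $\calD$-stable sub-\ca{s} $I_n\subseteq I$ and $B_n\subseteq B$, so that at each step $E_n\cap I\subseteq I_{n+1}$, $q(E_n)\subseteq B_{n+1}$, and both $I_{n+1}$ and a chosen separable preimage $F_{n+1}\subseteq E$ of $B_{n+1}$ are contained in $E_{n+1}$. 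Then $I_n\subseteq E_n\cap I\subseteq I_{n+1}$ and $B_n\subseteq q(E_n)\subseteq B_{n+1}$, so the chain $(I_n)_n$ is cofinal in $(E_n\cap I)_n$ and $(B_n)_n$ is cofinal in $(q(E_n))_n$. Putting $E':=\overline{\bigcup_n E_n}$ and $J:=\overline{\bigcup_n(E_n\cap I)}=\overline{\bigcup_n I_n}$, I would check that $J$ is an ideal of $E'$ with $J\cong\varinjlim_n I_n$, and, using exactness of sequential inductive limits of \ca{s}, that $E'/J\cong\varinjlim_n E_n/(E_n\cap I)\cong\varinjlim_n q(E_n)\cong\varinjlim_n B_n$. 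Hence $J$ and $E'/J$ are sequential inductive limits of separable, $\calD$-stable \ca{s}, so both are $\calD$-stable, and then the separable extension $0\to J\to E'\to E'/J\to 0$ forces $E'$ to be $\calD$-stable; and $E_0\subseteq E'$, as wanted.

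I expect the main obstacle to be the bookkeeping in this last construction: one must enlarge $E_n$ enough that $E_n\cap I$ and $q(E_n)$ absorb the $\calD$-stable pieces chosen at the previous stage, while keeping those pieces (and a preimage of $B_{n+1}$) inside $E_{n+1}$, so that the two telescoping chains really do become mutually cofinal and the identification of $E'/J$ with $\varinjlim_n q(E_n)$ is legitimate. Once the chains are set up correctly, everything reduces to the permanence properties from \cite[Section~3, Corollary~3.4]{TomWin07ssa}. This gives a self-contained proof of the relevant permanence statements of Farah--Szab\'{o} \cite{FarSza24arX:CoronaSSA}.
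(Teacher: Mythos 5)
Your argument is correct, but note that the paper does not actually prove this lemma: its ``proof'' is a two-line citation of Farah--Szab\'{o} (\cite[Lemma~1.14]{FarSza24arX:CoronaSSA} for the extension statement and \cite[Proposition~1.12]{FarSza24arX:CoronaSSA} for hereditary subalgebras). So your proposal is by construction a different route --- a self-contained deduction from the separable permanence properties of Toms--Winter, which is exactly what one would write if one wanted to avoid the reference. The two substantive points both check out: (i) for $B\subseteq A$ hereditary and any sub-\ca{} $A'\subseteq A$, the intersection $A'\cap B$ is hereditary in $A'$, which settles part~(2) and the ideal half of part~(1) (ideals being hereditary), while the quotient half is the routine lift-and-enlarge you describe; and (ii) in the interleaving construction for the converse of~(1), one has $E_n\cap J=E_n\cap I$ because $J\subseteq I$ and $E_n\cap I\subseteq J$, so $J$ is an ideal of $E'=\overline{\bigcup_n E_n}$ with $E'/J\cong\varinjlim_n q(E_n)\cong\varinjlim_n B_n$ and $J=\varinjlim_n I_n$; both are sequential inductive limits of separable $\calD$-stable algebras, hence $\calD$-stable by \cite[Corollary~3.4]{TomWin07ssa}, and the extension theorem for separable \ca{s} then gives $\calD$-stability of $E'$. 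What your approach buys is independence from \cite{FarSza24arX:CoronaSSA}; the cost is precisely the cofinality bookkeeping you flag, which is standard and which you have set up correctly (the key normalizations being $I_n\subseteq E_n\cap I\subseteq I_{n+1}$ and $B_n\subseteq q(E_n)\subseteq B_{n+1}\subseteq q(E_{n+1})$).
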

\begin{proof}
The first statement was proved in \cite[Lemma~1.14]{FarSza24arX:CoronaSSA}, and the second statement was shown in \cite[Proposition~1.12]{FarSza24arX:CoronaSSA}.
\end{proof}

The following concept was introduced in \cite[Definition~1.11]{FarSza24arX:CoronaSSA}.

\begin{dfn}
\label{dfn:Dsat}
Let $\calD$ be a unital \ca.
We say that a unital \ca{}~$A$ is \emph{$\calD$-saturated} if for every separable sub-\ca{} $B \subseteq A$ there is a unital $\ast$-embedding $\calD\to B'\cap A$.
\end{dfn}

\begin{thm}
\label{prp:CharSepDStable}
Let $\calD$ be a strongly self-absorbing \ca, and let $A$ be a \ca.
Then the following are equivalent:
\begin{enumerate}[{\rm (1)}]
\item
The \ca{} $A$ is separably $\calD$-stable.
\item
The ultrapower $A_\filter$ is separably $\calD$-stable.
\item
The relative commutant $B' \cap A_\filter$ is separably $\calD$-stable for every separable sub-\ca{} $B \subseteq A_\filter$.
\item
The quotient $(B' \cap A_\filter)/(B^\perp \cap A_\filter)$ is $\calD$-saturated for every separable sub-\ca{} $B \subseteq A_\filter$.
\item
There exists a unital $\ast$-homomorphism $\calD \to (B' \cap A_\filter)/(B^\perp \cap A_\filter)$ for every separable sub-\ca{} $B \subseteq A_\filter$.
\end{enumerate}
If $A$ is $\sigma$-unital, then the above statements are also equivalent to
\begin{enumerate}[{\rm (1)}]
\setcounter{enumi}{5}
\item
The multiplier algebra $M(A)$ is separably $\calD$-stable.
\item
The \ca{} $M(A)_\filter$ is $\calD$-saturated.
\end{enumerate}
\end{thm}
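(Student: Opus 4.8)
The plan is to use \cref{prp:CharDStable} as the engine throughout: in the separable setting it identifies $\calD$-stability of a \ca{}~$E$ with the existence of a unital \stHom{} $\calD \to F(E)$. Combining this with two standard ultrapower tools --- \emph{lifting} (elements of $A_\filter$, and of quotients of relative commutants by annihilator ideals, lift to bounded sequences in~$A$) and \emph{reindexing} (using that $A_\filter$ is countably saturated, equivalently Kirchberg's $\varepsilon$-test, one may push a separable subalgebra of $(A_\filter)_\filter$, or of $B'\cap A_\filter$ for separable~$B$, down into~$A_\filter$ while fixing a prescribed separable subalgebra) --- will give all the equivalences. The core is `(1)$\Leftrightarrow$(5)'; granting it, `(2)' and `(3)' follow by running the arguments inside the \ca{s} $A_\filter$ and $B'\cap A_\filter$, `(4)$\Rightarrow$(5)' is immediate from the definition of $\calD$-saturation (\cref{dfn:Dsat}, applied to the zero subalgebra), `(5)$\Rightarrow$(4)' is routine, and the $\sigma$-unital statements reduce to the unital \ca{}~$M(A)$.

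For `(5)$\Rightarrow$(1)', the substantive direction, we fix $B_0 \in \Sep(A)$ and build an increasing sequence $B_0 \subseteq B_1 \subseteq \cdots$ in $\Sep(A)$: given $B_{m-1}$, viewed inside~$A_\filter$, condition~(5) provides a unital \stHom{} $\calD \to (B_{m-1}'\cap A_\filter)/(B_{m-1}^\perp\cap A_\filter)$; by nuclearity of~$\calD$ this lifts to a c.p.c.\ map $\bar\psi_m\colon \calD \to B_{m-1}'\cap A_\filter$, and lifting the values of~$\bar\psi_m$ on a fixed countable dense subset of~$\calD$ to bounded sequences in~$A$ (and using the $\varepsilon$-test to arrange finitely many commutators and relation-defects to be small) we may take those values in~$A$; let $B_m$ be generated by $B_{m-1}$ and them. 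Set $B := \overline{\bigcup_m B_m} \in \Sep(A)$. The family $(\bar\psi_m)_m$ then induces a c.p.c.\ map $\calD \to B_\filter$ that is asymptotically central over~$B$ (each~$\bar\psi_m$ centralises~$B_{m-1}$, and $\bigcup_m B_{m-1}$ is dense in~$B$) and whose failures of multiplicativity and of being $\ast$-preserving annihilate~$B$ in the limit, hence lie in $B^\perp\cap B_\filter$; so it descends to a unital \stHom{} $\calD \to F(B)$, and \cref{prp:CharDStable} gives that $B$ is $\calD$-stable, with $B_0 \subseteq B$.

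For `(1)$\Rightarrow$(5)', let $B \subseteq A_\filter$ be separable; representing a countable generating set of~$B$ by bounded sequences, let $B_0 \in \Sep(A)$ be generated by their entries and use~(1) to find a separable $\calD$-stable $B_1 \in \Sep(A)$ with $B_0 \subseteq B_1$, so that $B \subseteq (B_1)_\filter \subseteq A_\filter$. By \cref{prp:CharDStable}, $F(B_1)$ contains a unital copy of~$\calD$, realised by sequences in~$B_1$ asymptotically central over~$B_1$ with relation-defects in~$B_1^\perp$; a diagonal choice of indices, made so that the chosen terms almost commute with, and have relation-defects almost annihilated by, the first~$n$ terms of the sequences representing~$B$, produces elements of $B'\cap A_\filter$ whose images in $(B'\cap A_\filter)/(B^\perp\cap A_\filter)$ generate a unital copy of~$\calD$. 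With `(1)$\Leftrightarrow$(5)' established, condition~(5) for~$A$ yields~(2) and~(3) by running the construction of the previous paragraph inside~$A_\filter$, resp.\ inside $B'\cap A_\filter$ (where, the relevant elements already lying in the ambient ultrapower, no secondary lift to sequences is needed, and one uses $C^\perp \subseteq B^\perp$ when $B \subseteq C$ to match annihilators). Conversely~(3) implies~(2) by taking $B = \{0\}$, and~(2) implies~(5) for~$A$: applying the already-established equivalence `(1)$\Leftrightarrow$(5)' (which holds for any \ca) to the \ca{}~$A_\filter$ gives condition~(5) for~$A_\filter$, and the reindexing tool pushes the resulting copy of~$\calD$ from the corrected relative commutant over $(A_\filter)_\filter$ down into $(B'\cap A_\filter)/(B^\perp\cap A_\filter)$; combined with `(5)$\Rightarrow$(1)' this closes the cycle. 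Finally `(5)$\Rightarrow$(4)' follows by lifting and enlarging a given separable subalgebra of $(B'\cap A_\filter)/(B^\perp\cap A_\filter)$ and applying~(5) to the enlargement.

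Assume now that~$A$ is $\sigma$-unital, so~$M(A)$ is unital. Applying the equivalence of~(1)--(5) to~$M(A)$ identifies~(6) with condition~(5) for~$M(A)$, which in turn is equivalent to~(7): $\calD$-saturation of $M(A)_\filter$ yields~(5) for~$M(A)$ upon composing with the quotient map, while conversely, given~(5) and a separable $C \subseteq M(A)_\filter$, applying~(5) to $B := C + \CC 1$ --- for which $B^\perp\cap M(A)_\filter = \{0\}$ and $B'\cap M(A)_\filter = C'\cap M(A)_\filter$ --- produces the required unital \stHom{} $\calD \to C'\cap M(A)_\filter$. The implication `(6)$\Rightarrow$(1)' is immediate from \cref{prp:PermanenceSepDStable}(2), since~$A$ is an ideal, hence a hereditary sub-\ca, of~$M(A)$; and `(1)$\Rightarrow$(6)' follows by combining \cref{prp:PermanenceSepDStable}(1), applied to $0 \to A \to M(A) \to M(A)/A \to 0$, with Farah and Szabó's theorem that the corona of a $\sigma$-unital, separably $\calD$-stable \ca{} is separably $\calD$-stable \cite{FarSza24arX:CoronaSSA}. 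The recurring obstacle --- and the only genuinely delicate point --- is the reindexing bookkeeping: transporting a single unital copy of~$\calD$ among the various corrected relative central sequence algebras and down to honest sequences in~$A$ while keeping the annihilator ideals aligned; once this is arranged, the inductive unions and diagonal arguments are mechanical.
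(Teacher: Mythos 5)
Your proposal is correct in outline, but it takes a genuinely different route from the paper's proof. The paper outsources the substantive equivalences to Farah and Szab\'o: `(1)$\Leftrightarrow$(2)' is deduced from the axiomatizability of separable $\calD$-stability (so that it passes to ultrapowers and ultraroots), `(1)$\Leftrightarrow$(5)' from their characterization of separable $\calD$-stability via unital \stHom{s} $\calD\to (B_0'\cap A_\infty)/(B_0^\perp\cap A_\infty)$ (transported from the sequence algebra $A_\infty$ to $A_\filter$), and `(5)$\Rightarrow$(3)' from the permanence properties of \cref{prp:PermanenceSepDStable} applied to the extension $0\to B^\perp\cap A_\filter\to B'\cap A_\filter\to (B'\cap A_\filter)/(B^\perp\cap A_\filter)\to 0$. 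You instead reprove these inputs directly: `(5)$\Rightarrow$(1)' and `(5)$\Rightarrow$(3)' by an inductive chain of separable subalgebras whose Choi--Effros-lifted c.p.c.\ maps assemble into a unital \stHom{} $\calD\to F(B)$, feeding into \cref{prp:CharDStable}, and `(1)$\Rightarrow$(5)', `(2)$\Rightarrow$(5)' by diagonal reindexing. This amounts to a self-contained proof of the Farah--Szab\'o characterization; it is more elementary in its dependencies but substantially longer once the reindexing bookkeeping (which you correctly identify as the delicate point, and which carries essentially all the weight) is written out in full, whereas the paper's citations buy brevity. For the $\sigma$-unital statements both routes rest on the same external result: the paper cites \cite[Theorem~B]{FarSza24arX:CoronaSSA} for `(1)$\Leftrightarrow$(6)' outright, while you reassemble it from the corona theorem together with \cref{prp:PermanenceSepDStable}; your handling of `(6)$\Leftrightarrow$(7)' via condition~(5) for $M(A)$ and the observation that $B:=C+\CC 1$ has trivial annihilator is a clean alternative to the paper's argument. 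I see no step that would fail.
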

\begin{proof}
We show the implications `(1)$\Leftrightarrow$(2)' and `(1)$\Leftrightarrow$(5)' and `(3)$\Rightarrow$(4)$\Rightarrow$(5)$\Rightarrow$(3)' and `(1)$\Leftrightarrow$(6)' and `(6)$\Rightarrow$(7)$\Rightarrow$(6)'.

(1)$\Leftrightarrow$(2):
It follows from \cite[Corollary~1.10]{FarSza24arX:CoronaSSA} that separable $\calD$-stability is axiomatizable;
see also \cite[Theorem~2.5.2(21)]{FarHarLupRobTikVigWin21ModelThy}.
Since axiomatizable properties pass to ultrapowers and ultraroots (\cite[Theorem 2.4.1]{FarHarLupRobTikVigWin21ModelThy}), we see that~(1) and~(2) are equivalent.

\smallskip

(1)$\Leftrightarrow$(5):
We use that by \cite[Corollary~1.10(2) and Definition 1.8]{FarSza24arX:CoronaSSA}, the \ca{} $A$ is separably $\calD$-stable if and only if $A$ satisfies condition (1) in \cite[Proposition 1.7]{FarSza24arX:CoronaSSA}, namely, for any sub-\ca{} $B_0$ of $A$, there is a unital $\ast$-homomorphism $\calD\to (B_0'\cap A_\infty)/(B_0^\perp\cap A_\infty)$. 

For the forward implication, assuming that $A$ is separably $\calD$-stable, let $B \subseteq A_\filter$ be a separable sub-\ca.
We consider the natural quotient map $\pi\colon A_\infty \to A_\filter$ from the sequence algebra $A_\infty := \prod_\NN A / c_0(A)$ where $c_0(A)$ is the closed ideal of sequences $(a_n)_n$ in $A$ with $\lim_{n\to\infty}\|a_n\|=0$.
Choose a separable sub-\ca{} $B_0 \subseteq A_\infty$ with $\pi(B_0)=B$.
By \cite[Proposition~1.7 (1)]{FarSza24arX:CoronaSSA}, there exists a unital \stHom{} $\calD \to (B_0' \cap A_\infty)/(B_0^\perp \cap A_\infty)$.
The map $\pi$ restricts to a map $B_0' \cap A_\infty \to B' \cap A_\filter$, which induces a unital \stHom{} from $(B_0' \cap A_\infty)/(B_0^\perp \cap A_\infty)$ to $(B' \cap A_\filter)/(B^\perp \cap A_\filter)$.
We obtain the desired map as the composition
\[
\calD \to (B_0' \cap A_\infty)/(B_0^\perp \cap A_\infty)
\to (B' \cap A_\filter)/(B^\perp \cap A_\filter).
\]

For the backward implication, we notice that statement~(5) immediately implies condition~(3) in \cite[Proposition~1.7]{FarSza24arX:CoronaSSA}, which then implies that $A$ is separably $\calD$-stable.

\smallskip

(3)$\Rightarrow$(4):
To verify~(4), let $B \subseteq A_\filter$ be a separable sub-\ca. 
Set $Q:= (B' \cap A_\filter)/(B^\perp \cap A_\filter)$ and let $\pi\colon B' \cap A_\filter \to Q$ denote the quotient map.
To show that~$Q$ is $\calD$-saturated, let $C \subseteq Q$ be separable.
We need find a unital \stHom{} $\calD \to C' \cap Q.$

Choose a separable sub-\ca{} $C_0 \subseteq B' \cap A_\filter$ with $\pi(C_0)=C$.
Since the \ca{} $E$ generated by $C_0$ and $B$ is separable, it follows from the assumption that $E' \cap A_\filter$ is separably $\calD$-stable, and hence so is the quotient $\pi(E' \cap A_\filter)$ by \cref{prp:PermanenceSepDStable}.
Since $\pi(E' \cap A_\filter)$ is unital and separably $\calD$-stable, it admits a unital \stHom{} from $\calD$, and we obtain the desired map as the composition
\[
\calD 
\to \pi(E' \cap A_\filter)
\subseteq \pi(C_0)' \cap \pi(B ' \cap A_\filter)
= C' \cap Q.
\]

(4)$\Rightarrow$(5):
This follows using that every unital, $\calD$-saturated \ca{} admits a unital \stHom{} from $\calD$.

\smallskip

(5)$\Rightarrow$(3):
Assume that~(5) holds.
With a similar argument as in the implication `(3)$\Rightarrow$(4)', we see that $(B' \cap A_\filter)/(B^\perp \cap A_\filter)$ is $\calD$-saturated (and consequently separably $\calD$-stable) for every separable sub-\ca{} $B \subseteq A_\filter$.

To verify~(3), let $B \subseteq A_\filter$ be a separable sub-\ca, and consider the extension
\[
0 \to B^\perp \cap A_\filter \to B' \cap A_\filter \to (B' \cap A_\filter)/(B^\perp \cap A_\filter) \to 0.
\]
We have already shown that (5) implies~(2), which shows that $A_\filter$ is separably $\calD$-stable.
Since $B^\perp \cap A_\filter$ is a hereditary sub-\ca{} of $A_\filter$, it follows from \cref{prp:PermanenceSepDStable}(2) that $B^\perp \cap A_\filter$ is separably $\calD$-stable.
Further, as noted above, the quotient $(B' \cap A_\filter)/(B^\perp \cap A_\filter)$ is also separably $\calD$-stable.
Now it follows from \cref{prp:PermanenceSepDStable}(1) that $B' \cap A_\filter$ is separably $\calD$-stable.

\smallskip

(1)$\Leftrightarrow$(6):
Assuming that $A$ is $\sigma$-unital, the equivalence of~(1) and~(6) is shown in \cite[Theorem~B]{FarSza24arX:CoronaSSA}.

\smallskip

(6)$\Rightarrow$(7):
Assume that $M(A)$ is separably $\calD$-stable.
To show that $M(A)_\filter$ is $\calD$-saturated, let $B \subseteq M(A)_\filter$ be separable.
Applying the equivalence of~(1) and~(3) for $M(A)$, it follows that $B' \cap M(A)_\filter$ is separably $\calD$-stable, and in particular admits a unital \stHom{} $\calD \to B' \cap M(A)_\filter$. Namely, if $B_0$ is a unital separable sub-\ca{} of $B'\cap M(A)_\calU$, then there is a separable $\calD$-stable sub-\ca{} $\tilde{B}_0$ with $B_0\subseteq \tilde{B}_0$.

\smallskip

(7)$\Rightarrow$(6):
Assume that $M(A)_\filter$ is $\calD$-saturated. Note that every (unital) $\calD$-saturated \ca{} $B$ is separably $\calD$-stable. To check this, let $C_0$ be a separable sub-\ca{} of $B$. Since $B$ is $\calD$-saturated, there is a unital $\ast$-embedding $\varphi\colon\calD\to C_0'\cap B$. Now $\calD_0:=\varphi(\calD)$ is separable and commutes with $C_0$, so if we set $C_1=C^*(C_0\cup\calD_0)\subseteq B$ we get a separable sub-\ca{} that contains $C_0$ and $\calD\otimes C_0\to C_1$, given by $d\otimes c\mapsto \varphi(d)c$ is an isomorphism.

Therefore, it follows that $M(A)_\filter$ is separably $\calD$-stable.
Applying the equivalence of~(1) and~(2) for $M(A)$, it follows that $M(A)$ is separably $\calD$-stable.
\end{proof}

Applying \cref{prp:CharSepDStable} for a separable \ca{}, we obtain:

\begin{cor}
\label{prp:CharDStable2}
Let $A$ be a separable \ca, and let $\calD$ be a strongly self-absorbing \ca.
Then the following are equivalent:
\begin{enumerate}[{\rm (1)}]
\item
The \ca{} $A$ is $\calD$-stable.
\item
The algebra $M(A)_\filter$ is $\calD$-saturated.
\item
The algebra $A' \cap M(A)_\filter$ is $\calD$-saturated.
\item
The (uncorrected) central sequence algebra $A' \cap A_\filter$ is separably $\calD$-stable.
\item
Kirchberg's central sequence algebra $F(A)$ is $\calD$-saturated.
\end{enumerate}
\end{cor}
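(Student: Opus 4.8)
The plan is to deduce all the equivalences from \cref{prp:CharSepDStable} --- applied both to $A$ itself (which is $\sigma$-unital, being separable) and to its multiplier algebra $M(A)$ --- together with the characterizations of $\calD$-stability in \cref{prp:CharDStable}. The starting point is the purely formal observation that for a \emph{separable} \ca{} $A$, being $\calD$-stable is the same as being separably $\calD$-stable, since one may take $B = A$ in \cref{dfn:SepDStable}. Thus condition~(1) here is condition~(1) of \cref{prp:CharSepDStable}, and condition~(2) here is verbatim its condition~(7), so the equivalence (1)$\Leftrightarrow$(2) is already contained in \cref{prp:CharSepDStable}. It remains to show that each of (3), (4), and (5) is equivalent to~(1).

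For the forward directions, assume $A$ is $\calD$-stable, hence separably $\calD$-stable. Viewing $A$ as a separable subalgebra of $A_\filter$ and applying the implications (1)$\Rightarrow$(3) and (1)$\Rightarrow$(4) of \cref{prp:CharSepDStable} with $B := A$ gives directly that $A' \cap A_\filter$ is separably $\calD$-stable (condition~(4)) and that $F(A) = (A' \cap A_\filter)/(A^\perp \cap A_\filter)$ is $\calD$-saturated (condition~(5)). For~(3), I would first invoke (1)$\Leftrightarrow$(6) of \cref{prp:CharSepDStable} to see that $M(A)$ is separably $\calD$-stable, and then apply (1)$\Rightarrow$(3) of \cref{prp:CharSepDStable} \emph{to $M(A)$}, obtaining that $E' \cap M(A)_\filter$ is unital and separably $\calD$-stable --- hence admits a unital \stHom{} from $\calD$ --- for every separable $E \subseteq M(A)_\filter$. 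Given a separable $C \subseteq A' \cap M(A)_\filter$, put $E := C^*(A \cup C)$; then $E' \cap M(A)_\filter = C' \cap (A' \cap M(A)_\filter)$, and the resulting unital \stHom{} $\calD \to C' \cap (A' \cap M(A)_\filter)$ witnesses that $A' \cap M(A)_\filter$ is $\calD$-saturated.

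For the converses, note that $M(A)_\filter$, $A' \cap M(A)_\filter$, and $F(A)$ are all unital (the first two contain $1_{M(A)}$, and $F(A)$ is unital since $A$ is $\sigma$-unital), and that any unital $\calD$-saturated \ca{} admits a unital \stHom{} from $\calD$ --- take the separable subalgebra in \cref{dfn:Dsat} to be $\CC 1$. Thus (3) produces a unital \stHom{} $\calD \to A' \cap M(A)_\filter$, which is condition~(3) of \cref{prp:CharDStable}, so $A$ is $\calD$-stable; likewise (2), applied with the separable subalgebra $A \subseteq M(A)_\filter$, produces a unital \stHom{} $\calD \to A' \cap M(A)_\filter$, and (5) produces a unital \stHom{} $\calD \to F(A)$, which is condition~(4) of \cref{prp:CharDStable}. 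Finally, if (4) holds then the quotient $F(A)$ of $A' \cap A_\filter$ is separably $\calD$-stable by \cref{prp:PermanenceSepDStable}(1), hence --- being unital --- admits a unital \stHom{} from $\calD$, and once more \cref{prp:CharDStable} yields $\calD$-stability of $A$.

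The only step that is not a direct translation is the forward implication to~(3): one must upgrade separable $\calD$-stability to the stronger $\calD$-saturation, and this is \emph{not} achieved by merely noting that $A' \cap M(A)_\filter$ is separably $\calD$-stable, because separable $\calD$-stability does not by itself produce \emph{central} copies of $\calD$. The resolution, as indicated above, is to apply the ``for all separable $B$'' form of \cref{prp:CharSepDStable}(3) to $M(A)$ rather than to $A$, so that a unital copy of $\calD$ is already available inside $E' \cap M(A)_\filter = C' \cap (A' \cap M(A)_\filter)$. Everything else is bookkeeping with relative commutants and quotients, and one should only check that each reduction to $C = \CC 1$ in \cref{dfn:Dsat}, and each use of the fact that a unital, separably $\calD$-stable \ca{} admits a unital \stHom{} from $\calD$, is legitimate here.
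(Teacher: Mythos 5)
Your proposal is correct and follows essentially the same route as the paper: everything is reduced to \cref{prp:CharDStable} and \cref{prp:CharSepDStable}, with the converses all funnelled through a unital \stHom{} $\calD \to A' \cap M(A)_\filter$ or $\calD \to F(A)$ and the forward directions read off from \cref{prp:CharSepDStable} applied to $B = A$. The only cosmetic difference is in obtaining~(3): the paper deduces it from~(2) via the general observation that $C' \cap B$ is $\calD$-saturated whenever $B$ is $\calD$-saturated and $C \subseteq B$ is separable, whereas you route through separable $\calD$-stability of $M(A)$ and of $E' \cap M(A)_\filter$ for $E = C^*(A \cup C)$ --- the same adjoin-$A$-to-$C$ trick in a slightly different wrapper.
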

\begin{proof}
We show the implications `(1)$\Leftrightarrow$(2)$\Rightarrow$(3)$\Rightarrow$(1)' and `(1)$\Leftrightarrow$(4)' and `(1)$\Leftrightarrow$(5)'.

(1)$\Leftrightarrow$(2):
This follows from the equivalence of~(1) and~(7) in \cref{prp:CharSepDStable}.

\smallskip

(2)$\Rightarrow$(3):
In general, if $B$ is a $\calD$-saturated \ca{} and $C \subseteq B$ is a separable sub-\ca{}, then $C' \cap B$ is $\calD$-saturated.~For, if $E\subseteq C'\cap B$ is a separable sub-\ca{}, then $C^*(E\cup C)$ is also separable and by assumption there is a unital $\ast$-embedding $\calD\to C^*(E\cup C)'\cap B\subseteq E'\cap C'\cap B$. Now, since $A$ is separable, we see that~(2) implies~(3).

\smallskip

(3)$\Rightarrow$(1):
Assuming that $A' \cap M(A)_\filter$ is $\calD$-saturated, it follows that there is a unital \stHom{} $\calD \to A' \cap M(A)_\filter$.
Applying the implication `(3)$\Rightarrow$(1)' in \cref{prp:CharDStable}, we see that $A$ is $\calD$-stable.

\smallskip

(1)$\Leftrightarrow$(4):
Since $A$ is separable, the forward implication follows from the implication `(1)$\Rightarrow$(3)' in \cref{prp:CharSepDStable}.
For the backward implication, assume that $A' \cap A_\filter$ is separably $\calD$-stable.
Since $F(A)$ is a quotient of $A' \cap A_\filter$, and using \cref{prp:PermanenceSepDStable}(1), we deduce that $F(A)$ is separably $\calD$-stable and therefore admits a unital \stHom{} $\calD \to F(A)$.
Applying the implication `(4)$\Rightarrow$(1)' in \cref{prp:CharDStable}, we see that $A$ is $\calD$-stable.

\smallskip

(1)$\Leftrightarrow$(5):
Since $A$ is separable, the forward implication follows from the implication `(1)$\Rightarrow$(4)' in \cref{prp:CharSepDStable}.
Conversely, if $F(A)$ is $\calD$-saturated, then it admits a unital \stHom{} $\calD \to F(A)$, which by the implication `(4)$\Rightarrow$(1)' in \cref{prp:CharDStable} shows that $A$ is $\calD$-stable.
\end{proof}

\section{Pureness of central sequence algebras}
\label{sec:purecentral}
In this section, we prove the main result of this paper (\cref{prp:CharZStable}):
a separable \ca{}~$A$ is $\calZ$-stable if and only if its central sequence algebra $A' \cap A_\filter$ is pure.
In \cref{prp:CharSepZStable}, we obtain a similar characterization of separable $\calZ$-stability (for not necessarily separable \ca{s}) in terms of pureness of the relative central sequence algebras $B' \cap A_\filter$ for separable sub-\ca{s} $B \subseteq A_\filter$. 

Pureness is defined as the combination of strict comparison and almost divisibility, which can be thought of as good comparison and divisibility properties in the Cuntz semigroup; see below for the precise definitions. From this point of view, \cref{prp:CharZStable} shows that a separable \ca{} is $\calZ$-stable if and only if its central sequence algebra enjoys good comparison and good divisibility properties.

The relation $<_s$ on an ordered monoid is defined by setting $x<_sy$ if there exists $n \in \NN$ such that $(n+1)x \leq ny$. Then, by definition, $A$ has \emph{strict comparison} if $x<_s y$ in $\Cu(A)$ implies $x\leq y$. This is also referred to by saying that $\Cu(A)$ is \emph{almost unperforated}. One says that $A$ is \emph{$n$-almost divisible} if, for any pair $x'x\in \Cu(A)$ with $x'\ll x$, $k\in\NN$, there is $y\in\Cu(A)$ with $ky\leq x$ and $x'\leq (n+1)(k+1)y$. The \ca{} $A$ is \emph{almost divisible} if it is $0$-almost divisible.

\smallskip

We now record the following observation.

\begin{prp}
\label{prp:SepZStablePure}
Every separably $\calZ$-stable \ca{} is pure.
\end{prp}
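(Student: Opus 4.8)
The plan is to establish pureness of $A$ by verifying its two defining properties --- strict comparison and almost divisibility --- directly in $\Cu(A)$, using the principle that each individual instance of these properties is governed by only countably much data and hence takes place inside a separable sub-\ca{} of $A$, which by hypothesis sits inside a separable $\calZ$-stable (and hence pure, by R{\o}rdam's theorem recalled in the introduction) sub-\ca{}. As a preliminary reduction I would replace $A$ by $A\otimes\Cpct$: this leaves the Cuntz semigroup unchanged, it is again separably $\calZ$-stable (any separable $C_0\subseteq A\otimes\Cpct$ is contained in $A_0\otimes\Cpct$ for some separable $A_0\subseteq A$, which embeds in a separable $\calZ$-stable $B\subseteq A$, and then $B\otimes\Cpct$ is separable, $\calZ$-stable and contains $C_0$), and it is stable, so every element of $\Cu(A)$ is the class of a positive element of $A$ itself.

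For strict comparison: given $x<_s y$ in $\Cu(A)$, say $(n+1)x\le ny$, I would write $x=[a]$ and $y=[b]$ with $a,b\in A_+$, and note that this inequality amounts to a Cuntz subequivalence $a^{\oplus(n+1)}\precsim b^{\oplus n}$ in $M_{n+1}(A)$, implemented by some sequence $(r_k)_k$ in $M_{n+1}(A)$. I would then let $C_0\subseteq A$ be the separable sub-\ca{} generated by $a$, $b$ and the matrix entries of all the $r_k$, and choose a separable $\calZ$-stable sub-\ca{} $B$ with $C_0\subseteq B\subseteq A$. The same subequivalence then holds in $M_{n+1}(B)$, i.e.\ $(n+1)[a]\le n[b]$ in $\Cu(B)$; since $B$ is pure it has strict comparison, so $[a]\le[b]$ in $\Cu(B)$, and functoriality of $\Cu$ applied to $B\hookrightarrow A$ gives $x\le y$.

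For almost divisibility: given $x'\ll x$ in $\Cu(A)$ and $k\in\NN$, I would write $x=[a]$ with $a\in A_+$, pick $\delta>0$ with $x'\le[(a-\delta)_+]$, and choose a separable $\calZ$-stable sub-\ca{} $B\subseteq A$ containing $a$. In $\Cu(B)$ one has $[(a-\delta)_+]\ll[a]$, so almost divisibility of $B$ yields $y_0\in\Cu(B)$ with $ky_0\le[a]$ and $[(a-\delta)_+]\le(k+1)y_0$; pushing $y_0$ forward to $\Cu(A)$ gives $y$ with $ky\le x$ and $x'\le[(a-\delta)_+]\le(k+1)y$. Combining the two parts shows $A$ is pure. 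The only genuinely non-formal input is that $\calZ$-stability implies pureness; everything else is a routine ``separabilization'', and the single point that needs mild care is precisely this capturing of a Cuntz subequivalence and of the relation $\ll$ by countably much data inside a separable sub-\ca{}, since the amplified elements and the implementing sequences live in matrix algebras over $A$ and one must enlarge $B$ to contain their entries \emph{before} invoking the hypothesis. Equivalently, one could phrase the whole argument via inductive limits: the separable $\calZ$-stable sub-\ca{s} of $A$ form a club in $\Sep(A)$ (as noted after \cref{dfn:SepDStable}), so $A$ is a directed inductive limit of pure \ca{s}, and strict comparison and almost divisibility pass to such limits.
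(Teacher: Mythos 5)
Your argument is correct and follows essentially the same strategy as the paper: reduce to the club of separable $\calZ$-stable sub-\ca{s}, which are pure by R{\o}rdam's theorem together with \cite[Theorem~5.35]{AraPerTom11Cu}, and then pass from these to $A$. The only difference is that you carry out the local-to-global step for strict comparison and almost divisibility by hand (stabilizing to $A\otimes\Cpct$ and capturing each Cuntz subequivalence and each relation $\ll$ inside a separable sub-\ca{}), whereas the paper outsources exactly this step to the fact that pureness is separably determined \cite[Theorem~3.8]{PerThiVil25arX:ExtPureCAlgs}.
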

\begin{proof}
R{\o}rdam showed in \cite{Ror04StableRealRankZ} that every $\calZ$-stable \ca{} has almost unperforated Cuntz semigroup. 
It follows from \cite[Theorem~5.35]{AraPerTom11Cu} that it is also almost divisible, and hence pure; 
see \cite[Proposition~5.2]{AntPerThiVil24arX:PureCAlgs}.
Now let $A$ be a separably $\calZ$-stable \ca{}.
Then the family of separable, $\calZ$-stable sub-\ca{s} of $A$ forms an inductive system (indexed over itself) whose inductive limit is (isomorphic to) $A$.
By \cite[Theorem~3.8]{PerThiVil25arX:ExtPureCAlgs}, pureness of \ca{s} is separably determined (a term also introduced in \cite[Definition~2.2]{PerThiVil25arX:ExtPureCAlgs}), so in particular it passes to inductive limits, which shows that $A$ is pure.
\end{proof}

For $n \geq 1$, the \emph{dimension-drop algebra} $Z_{n,n+1}$ is defined as the sub-\ca{} of $C([0,1],M_n\otimes M_{n+1})$ consisting of the continuous functions $f \colon [0,1] \to M_n\otimes M_{n+1}$ that satisfy
\[
f(0)\in M_n\otimes 1_{n+1}, \andSep 
f(1)\in M_{n+1}\otimes 1_n.
\]

In \cite[Proposition~5.1]{RorWin10ZRevisited}, R{\o}rdam and Winter showed that a unital \ca{}~$A$ of stable rank one admits a unital \stHom{} $Z_{n,n+1} \to A$ if and only if the unit can be `almost' divided by $n$ in the Cuntz semigroup $\Cu(A)$ in the sense that there exists a Cuntz class $x \in \Cu(A)$ such that $nx \leq [1] \leq (n+1)x$.

In \cite[Theorem~3.6]{DadTom10Ranks}, Dadarlat and Toms proved the same result for separable, unital \ca{s} with nonempty tracial state space and with strict comparison of positive elements by tracial states.
The next result generalizes the result of Dadarlat and Toms by removing the assumptions of separability and of nonempty tracial state space, and by weakening strict comparison of positive elements by tracial states to strict comparison of positive elements by quasitraces.
A proof can essentially be found in \cite[Proposition~4.9]{NgRob16CommutatorsPureCa}, noting that the argument can be adapted to only require almost divisibility of the unit. 
We reproduce the (adapted) argument here for the convenience of the reader.

\begin{prp}
\label{prp:Z23-to-Pure}
Let $A$ be a unital \ca{} with strict comparison (of positive elements by quasitraces). Then, the following are equivalent:
\begin{enumerate}[{\rm(1)}]
\item 
The element $[1]$ is almost divisible in $\Cu(A)$. 
\item 
For each $n \geq 1$, there exists a unital \stHom{} $Z_{n,n+1} \to A$.
\end{enumerate}
\end{prp}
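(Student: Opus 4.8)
The plan is to prove both implications by reducing to the case of matrix amplifications and then invoking R\o{}rdam and Winter's characterization \cite[Proposition~5.1]{RorWin10ZRevisited}, which applies to \ca{s} of stable rank one. The key observation is that $Z_{n,n+1}$ is isomorphic to a unital hereditary sub-\ca{} of $Z_{n,n+1}\otimes\mathcal{K}$, but more usefully, a unital \stHom{} $Z_{n,n+1}\to A$ exists if and only if one exists into $A\otimes M_k$ for a suitable $k$, or into a matrix amplification where stable rank one can be arranged. Concretely, I would work with the algebra $Z_{n,n+1}\otimes M_\infty$ or pass to $A\otimes\mathcal{K}$; the point is that strict comparison of $A$ is inherited by $A\otimes M_k$ and by hereditary sub-\ca{s}, and almost divisibility of $[1]$ in $\Cu(A)$ is exactly the Cuntz-semigroup statement $[1]=[1_A]$ being almost divisible.

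\textbf{(2)$\Rightarrow$(1).} Suppose there is a unital \stHom{} $\varphi\colon Z_{n,n+1}\to A$. The dimension-drop algebra $Z_{n,n+1}$ contains a positive element $h$ whose class $x:=[\varphi(h)]\in\Cu(A)$ satisfies $nx\leq[1]\leq(n+1)x$; indeed, inside $Z_{n,n+1}$ one can exhibit such an element directly (for instance using a generator that is a rank-$(n+1)$ projection-like element fibrewise, or by the computation of $\Cu(Z_{n,n+1})$). Pushing forward along the unital map $\varphi$ and using that $\varphi_*[1_{Z_{n,n+1}}]=[1_A]=[1]$ gives $nx\leq[1]\leq(n+1)x$ in $\Cu(A)$. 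Applying this for all $n\geq1$ (and using $k$-divisibility facts, or just $2$-divisibility together with the characterization of almost divisibility of a single element) shows that $[1]$ is almost divisible in $\Cu(A)$. Here strict comparison is not needed.

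\textbf{(1)$\Rightarrow$(2).} This is the substantive direction. Fix $n\geq1$. Since $[1]$ is almost divisible, there is $x\in\Cu(A)$ with, after passing through an auxiliary element $x'\ll x$ and absorbing the $(k+1)$ factor, $nx\leq[1]\leq(n+1)x$. The strategy is to reduce to stable rank one: one replaces $A$ by a suitable sub-\ca{} (or amplification) where R\o{}rdam--Winter applies, or one follows Ng--Robert's adaptation \cite[Proposition~4.9]{NgRob16CommutatorsPureCa}, where the hypothesis of strict comparison \emph{by quasitraces} is used to control the Cuntz classes arising in the construction of the unital \stHom{} $Z_{n,n+1}\to A$, replacing the stable-rank-one input. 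Roughly, one builds the \stHom{} by constructing, inside $A$, a continuous path of positive elements interpolating between a "rank-$n$" configuration at $0$ and a "rank-$(n+1)$" configuration at $1$, using almost divisibility of $[1]$ to produce the Cuntz subequivalences at the endpoints and strict comparison to upgrade the relevant Cuntz comparisons to the honest Murray--von Neumann / unitary equivalences needed to glue. The cleanest route is to quote the argument of \cite[Proposition~4.9]{NgRob16CommutatorsPureCa} verbatim, observing (as the statement already notes) that it only uses almost divisibility of the unit rather than full almost divisibility, and that strict comparison by quasitraces suffices throughout.

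\textbf{Main obstacle.} The hard part is the implication (1)$\Rightarrow$(2): converting the purely order-theoretic datum $nx\leq[1]\leq(n+1)x$ in $\Cu(A)$ into an actual unital \stHom{} from $Z_{n,n+1}$. In the stable-rank-one case \cite{RorWin10ZRevisited} this rests on the well-behaved structure of the Cuntz semigroup (existence of suprema, the fact that Cuntz subequivalence is implemented by honest equivalences of open projections), and without stable rank one one must substitute strict comparison by quasitraces to extract the approximate unitary equivalences that make the fibrewise gluing along $[0,1]$ work. I would therefore present this direction by carefully reproducing the Ng--Robert construction, flagging exactly where almost divisibility of $[1]$ replaces full almost divisibility and where quasitracial strict comparison enters; the remaining steps (functoriality of $\Cu$, compatibility with matrix amplifications, passage to hereditary sub-\ca{s}) are routine.
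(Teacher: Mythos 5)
The direction (2)$\Rightarrow$(1) is fine and matches the paper, which gets the element $e \in Z_{n,n+1}$ with $n[e] \leq [1] \leq (n+1)[e]$ from \cite[Lemma~4.2]{Ror04StableRealRankZ} and pushes it forward. The problem is (1)$\Rightarrow$(2): you correctly point to \cite[Proposition~4.9]{NgRob16CommutatorsPureCa}, but your account of how that argument works is wrong on two counts, and it omits the one genuinely non-routine idea. There is no ``reduction to stable rank one'' and no fibrewise gluing along $[0,1]$ to perform by hand. What is actually invoked from \cite[Proposition~5.1]{RorWin10ZRevisited} is its general sufficient criterion, valid for \emph{any} unital \ca{}: a unital \stHom{} $Z_{n,n+1} \to A$ exists once one exhibits pairwise orthogonal elements $a_1,\ldots,a_n \in A_+$ that are pairwise equivalent in the strong sense $a_i = xx^*$, $a_j = x^*x$, together with $1 - \sum_i a_i \precsim (a_1-\varepsilon)_+$. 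Stable rank one enters R{\o}rdam--Winter only for the converse of their statement, which is not used here; no unitary or Murray--von Neumann equivalences need to be extracted from strict comparison.

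The real gap is quantitative. To apply strict comparison by quasitraces one needs a strict functional gap, and your sketch (``get $nx \leq [1] \leq (n+1)x$ and build the homomorphism'') provides none. The key move in the paper is to divide $[1]$ by $m = 2n$ rather than $n$: one produces $2n$ pairwise orthogonal, pairwise equivalent elements $d_1,\ldots,d_{2n}$ (upgrading the Cuntz-level statement $m[c] \leq [1] \leq (m+1)[(c-\varepsilon)_+]$ to exact equivalences of functional-calculus cut-downs via \cite[Lemma~2.4]{RobRor13Divisibility} and polar decompositions --- itself most of the written proof) and then pairs them into $a_j := d_{2j-1}+d_{2j}$. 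The $2n$-fold division yields, for every quasitrace $\tau$,
\[
d_\tau\Big( 1-\sum_{j=1}^n a_j \Big) \leq d_\tau(d_1') = \tfrac{1}{2}\, d_\tau(d_1'+d_2') \leq \tfrac{1}{2}\, d_\tau\big( (a_1-\tfrac{\varepsilon}{2})_+ \big),
\]
and the factor $\tfrac{1}{2}$ is exactly the room strict comparison needs to conclude $1-\sum_j a_j \precsim (a_1-\tfrac{\varepsilon}{2})_+$. With only an $n$-fold division one gets $d_\tau(1-\sum_j a_j) \leq d_\tau\big((a_1-\varepsilon)_+\big)$ with no strict gap, and strict comparison gives nothing. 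Without identifying this doubling-and-pairing step, the proposal does not constitute a proof of (1)$\Rightarrow$(2).
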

\begin{proof}
Assuming~(2), let us verify~(1).~To show that $[1]$ is almost divisible, let $n \geq 1$.
We need to find $y \in \Cu(A)$ such that $ny \leq [1] \leq (n+1)y$.
By assumption, there is a unital \stHom{} $\varphi \colon Z_{n,n+1} \to A$.
By \cite[Lemma~4.2]{Ror04StableRealRankZ}, there exists a positive element $e \in Z_{n,n+1}$ such that $n[e] \leq [1] \leq (n+1)[e]$ in $\Cu(Z_{n,n+1})$.
Now $y:=[\varphi(e)]$ has the desired properties.

\smallskip

Assuming~(1), let us prove~(2).~For $a,b\in A_+$, write $a\simeq b$ whenever $a=xx^*$ and $b=x^*x$ for some $x\in A$. We will apply \cite[Proposition~5.1]{RorWin10ZRevisited}, which states that there exists a unital \stHom{} $Z_{n,n+1} \to A$ whenever one can find pairwise orthogonal elements $a_1,\ldots ,a_n \in A_+$ such that $a_i\simeq a_j$ for each pair $i,j$ and $1-\sum_i a_i\precsim (a_1-\varepsilon )_+$ for some $\varepsilon >0$ (where $\precsim$ denotes Cuntz subequivalence).  

To verify~(2), let $n \geq 1$ and set $m=2n$. 
By (1), there exists $c \in (A\otimes\Cpct)_+$ such that
$m[c] \leq [1] \leq (m+1)[c]$.
Since $[1] \ll [1]$, we can pick $\varepsilon>0$ such that $[1] \leq (m+1)[(c-\varepsilon )_+]$.
We thus have
\[
m[c] \leq [1] \leq (m+1)[(c-\varepsilon )_+].
\]

Applying \cite[Lemma~2.4]{RobRor13Divisibility} (or, rather, its proof) to the first inequality, one obtains pairwise orthogonal elements $c_1,\ldots ,c_m\in A_+$ such that
\[
(c-\tfrac{\varepsilon}{2})_+\simeq c_1\simeq \ldots \simeq c_m.
\]
Let $f \colon \RR \to [0,1]$ be a continuous function that takes the value $0$ on $(-\infty,0]$ and that takes the value $1$ on $[\tfrac{\varepsilon}{2},\infty)$.
Using continuous functional calculus, set $d_j' := (c_j-\tfrac{\varepsilon}{2})_+ $ and $d_j := f(c_j)$, and note that then $d_j'=d_j'd_j$ for $j=1,\ldots,m$.
Since $d_j$ belongs to $\overline{c_jAc_j}$, and since $c_1,\ldots,c_m$ are pairwise orthogonal, it follows that $d_1,\ldots,d_m$ are pairwise orthogonal as well.

For each $j=1,\ldots,m$, we claim that $d_j \simeq f( (c-\tfrac{\varepsilon}{2})_+ )$ and $d_j' \simeq (c-\varepsilon)_+$. 
Since $(c-\tfrac{\varepsilon}{2})_+ \simeq c_j$, we can choose $x_j \in A$ such that $(c-\tfrac{\varepsilon}{2})_+ = x_jx_j^*$ and $c_j = x_j^*x_j$.
Let $x_j = v_j|x_j|$ be the polar decomposition of $x_j$ in $A^{**}$, and set
\[
y_j := f\big( (c-\tfrac{\varepsilon}{2})_+ \big)^{\frac{1}{2}} v_j, \andSep
z_j := ( c-\varepsilon )_+^{\frac{1}{2}} v_j.
\]
Since $f\big( (c-\tfrac{\varepsilon}{2})_+ \big)^{\frac{1}{2}}$ and $( c-\varepsilon )_+^{\frac{1}{2}}$ belong to $\overline{A|x_j^*|}$ and $|x_j^*|v_j=x_j \in A$, we deduce that $y_j$ and $z_j$ belong to $A$.
We have
\[
y_jy_j^*
= f\big( (c-\tfrac{\varepsilon}{2})_+ \big), \andSep
z_jz_j^*
= ( c-\varepsilon )_+.
\]

Note that $v_j^*(c-\tfrac{\varepsilon}{2})_+v_j =v_j^*x_jx_j^*v_j=|x_j||x_j|= c_j$.
Since the map $b \mapsto v_j^*bv_j$ defines a $\ast$-isomorphism from $\overline{(c-\tfrac{\varepsilon}{2})_+A(c-\tfrac{\varepsilon}{2})_+}$ onto $\overline{c_jAc_j}$, and since applying functional calculus is compatible with \stHom{s}, we deduce that
\[
y_j^*y_j
= v_j^* f\big( (c-\tfrac{\varepsilon}{2})_+ \big) v_j
= f\big( v_j^*(c-\tfrac{\varepsilon}{2})_+v_j \big) 
= f(c_j)
= d_j
\]
and
\begin{align*}
z_j^*z_j
&= v_j^* ( c-\varepsilon )_+ v_j \\
&= v_j^* \big( (c-\tfrac{\varepsilon}{2})_+ - \tfrac{\varepsilon}{2} \big)_+  v_j \\
&= \big( v_j^*(c-\tfrac{\varepsilon}{2})_+v_j - \tfrac{\varepsilon}{2} \big)_+ \\
&= (c_j-\tfrac{\varepsilon}{2})_+
= d_j'.
\end{align*}
It follows that
\[
d_j
= y_j^*y_j
\simeq y_jy_j^*
= f\big( (c-\tfrac{\varepsilon}{2})_+ \big), \andSep
d_j'
= z_j^*z_j
\simeq z_jz_j^*
= ( c-\varepsilon )_+,
\]
as claimed.

\smallskip

Given a quasitrace $\tau$ on $A$, we define $d_\tau \colon (A\otimes\Cpct)_+\to[0,\infty]$ by $d_\tau (a):=\lim_n \tau (a^{1/n})$.
Using at the second step that $1-\sum_{j=1}^m d_j$ is orthogonal to each $d_1',\ldots,d_m'$, and using at the last step that $(c-\varepsilon)_+$ is in particular Cuntz equivalent to $d_1'$, one gets
\begin{align*}
d_\tau \left( 1-\sum_{j=1}^m d_j \right) + m d_\tau (d_1') 
&= d_\tau \left( 1-\sum_{j=1}^m d_j \right) + \sum_{j=1}^m d_\tau (d_j')
\leq d_\tau (1) \\
&\leq (m+1) d_\tau((c-\varepsilon)_+)
= (m+1) d_\tau(d_1').
\end{align*}
Since $d_\tau(d_1')<\infty$, we can cancel $m d_\tau (d_1')$ and obtain
\[
d_\tau \left( 1-\sum_{j=1}^m d_j \right) 
\leq  d_\tau(d_1')
= \frac{1}{2}d_\tau(d_1') + \frac{1}{2}d_\tau(d_2')
= \frac{1}{2}d_\tau(d_1'+d_2').
\] 
Pair the elements $d_j$ by setting $a_1=d_1+d_2$, \ldots, $a_n=d_{2n-1}+d_{2n}$.
We have 
\[
d_1' 
= (c_1-\tfrac{\varepsilon}{2})_+
\precsim (f(c_1)-\tfrac{\varepsilon}{2})_+ 
= (d_1-\tfrac{\varepsilon}{2})_+
\]
and similarly $d_2' \precsim (d_2-\tfrac{\varepsilon}{2})_+$.
Using that $d_1$ and $d_2$ are orthogonal, we deduce that
\[
d_1' + d_2'
\precsim (d_1-\tfrac{\varepsilon}{2})_+ + (d_2-\tfrac{\varepsilon}{2})_+
= (a_1-\tfrac{\varepsilon}{2})_+.
\]
We thus have 
\[
d_\tau\left( 1-\sum_{j=1}^n a_j \right)
= d_\tau \left( 1-\sum_{j=1}^m d_j \right) 
\leq \frac{1}{2}d_\tau(d_1'+d_2')
\leq \frac{1}{2}d_\tau\big( (a_1-\tfrac{\varepsilon}{2})_+ \big).
\]

Using strict comparison, one deduces from the previous functional inequality that $1-\sum_{j=1}^n a_j \precsim (a_1-\tfrac{\varepsilon}{2} )_+$. 
This finishes the proof.
\end{proof}

\begin{cor}[{\cite[Proposition~4.9]{NgRob16CommutatorsPureCa}}]\label{prp:PureDimDrop}
Every unital pure \ca{} $A$ admits unital \stHom{s} $Z_{n,n+1} \to A$ for every $n\in\NN$.
\end{cor}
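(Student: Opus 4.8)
The plan is to derive \cref{prp:PureDimDrop} directly from \cref{prp:Z23-to-Pure} by reducing the general case to the unital case already treated there. Since $A$ is assumed unital, this is almost immediate: a pure \ca{} has, by definition, strict comparison (of positive elements by quasitraces) and is almost divisible. So I would first observe that almost divisibility of $A$ in particular implies that the class $[1_A] \in \Cu(A)$ is almost divisible, i.e., condition~(1) of \cref{prp:Z23-to-Pure} holds. Indeed, $0$-almost divisibility says that for every $x' \ll x$ in $\Cu(A)$ and every $k \in \NN$ there is $y$ with $ky \le x$ and $x' \le (k+1)y$; applying this with $x = x' = [1_A]$ (which is legitimate since $[1_A] \ll [1_A]$ as $A$ is unital, hence $1_A$ is a compact element) and $k = n$ yields $y \in \Cu(A)$ with $ny \le [1_A] \le (n+1)y$, which is precisely the statement that $[1_A]$ is almost divisible.

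Having verified hypothesis~(1) of \cref{prp:Z23-to-Pure}, the remaining work is to invoke that proposition: since $A$ has strict comparison and $[1]$ is almost divisible in $\Cu(A)$, the implication `(1)$\Rightarrow$(2)' there gives, for each $n \ge 1$, a unital \stHom{} $Z_{n,n+1} \to A$. That is exactly the conclusion of \cref{prp:PureDimDrop}. So the proof is essentially a two-line deduction: unpack the definition of pureness to extract the two hypotheses of \cref{prp:Z23-to-Pure}, then quote that result.

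There is essentially no obstacle here; the only point requiring a word of care is the bookkeeping between ``$n$-almost divisibility'' as defined in the paper (with its parameters $x' \ll x$, $k \in \NN$, and the bound $(n+1)(k+1)y$) and the cleaner statement ``$[1]$ is almost divisible in $\Cu(A)$'' used as hypothesis~(1) of \cref{prp:Z23-to-Pure}, which in the proof of that proposition is taken to mean: for every $n$ there is $y$ with $ny \le [1] \le (n+1)y$. With $n$-almost divisibility specialized to $n = 0$ (i.e.\ almost divisibility), $x = x' = [1]$, and the divisor ``$k$'' in that definition set equal to the target integer $n$, one gets $ny \le [1]$ and $[1] \le (0+1)(n+1)y = (n+1)y$, matching exactly. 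I would simply state this reduction and then cite \cref{prp:Z23-to-Pure}.

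\begin{proof}
Since $A$ is unital, $1_A$ is a compact element of $\Cu(A)$, so $[1_A] \ll [1_A]$.
As $A$ is pure, it is in particular almost divisible, so applying the definition of almost divisibility with $x'=x=[1_A]$ and $k=n$ yields, for each $n \geq 1$, an element $y \in \Cu(A)$ with $ny \leq [1_A] \leq (n+1)y$.
Thus $[1_A]$ is almost divisible in $\Cu(A)$.
Since $A$ also has strict comparison, \cref{prp:Z23-to-Pure} applies and gives, for each $n \geq 1$, a unital \stHom{} $Z_{n,n+1} \to A$.
\end{proof}
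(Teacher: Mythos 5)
Your proposal is correct and is exactly the deduction the paper intends: unpack pureness into strict comparison plus almost divisibility, note that the latter applied to $x'=x=[1_A]$ (legitimate since $[1_A]\ll[1_A]$) with $k=n$ gives $ny\le[1_A]\le(n+1)y$, and invoke the implication (1)$\Rightarrow$(2) of \cref{prp:Z23-to-Pure}. The only point glossed over — harmlessly, since the paper does the same — is the standard identification of strict comparison in the almost-unperforation sense with strict comparison of positive elements by quasitraces, which is the form of the hypothesis in \cref{prp:Z23-to-Pure}.
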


We expect a positive answer to the following question.
Under the additional assumption of stable rank one, this is shown in \cite[Proposition~7.6.5]{AntPerThi18TensorProdCu}.

\begin{qst}
Does every unital pure \ca{} admit a unital \stHom{} from $\calZ$?
\end{qst}

We will use the following consequence of \cite[Proposition~1.12]{Kir06CentralSeqPI}, which was stated under the additional assumption that $A$ is unital in \cite[Proposition~2.2, Corollary~2.3]{KirRor15CentralSeqCharacters}.

\begin{prp}[Kirchberg]
\label{prp:IntoCommutingPosition}
Let $A$ be a separable \ca{}, and let $B,D \subseteq F(A)$ be separable sub-\ca{s} with $1_{F(A)} \in D$.
Then there exists a unital, injective \stHom{} $D \to B' \cap F(A)$.

Consequently, there exists a unital \stHom{} $\bigotimes^\infty_{\mathrm{max}} D \to F(A)$.
\end{prp}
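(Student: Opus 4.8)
The plan is to obtain the first assertion from Kirchberg's reindexing technology and then bootstrap the statement about $\bigotimes^\infty_{\mathrm{max}} D$ from it by a short induction. For the first assertion I would begin with the case that $A$ is unital, so that $F(A) = A' \cap A_\filter$; here the statement is \cite[Proposition~2.2]{KirRor15CentralSeqCharacters}, proved there by a reindexing argument (an instance of Kirchberg's $\varepsilon$-test). Fix countable generating sets $(b^{(i)})_i$ of $B$ and $(d^{(j)})_j$ of $D$, with $d^{(0)} = 1$, and lift them to bounded, asymptotically central sequences $(b^{(i)}_n)_n$, $(d^{(j)}_n)_n$ in $A$. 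A diagonal argument then produces new asymptotically central sequences $(\widetilde{d}^{(j)}_n)_n$ such that, along $\filter$: the defining relations of $D$ hold in the limit (so the elements $[(\widetilde{d}^{(j)}_n)_n]$ generate a copy of $D$ in $A' \cap A_\filter$); the commutators $[\widetilde{d}^{(j)}_n, b^{(i)}_n]$ tend to $0$ (so that copy lies in $B' \cap A_\filter$); the norms are preserved (yielding injectivity); and $(\widetilde{d}^{(0)}_n)_n$ is constantly $1$ (yielding unitality). Kirchberg's $\varepsilon$-test is precisely the device that packages these countably many asymptotic requirements into a single diagonal sequence.

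For general, possibly nonunital, $A$ one runs the same argument inside $F(A)$ rather than $A' \cap A_\filter$; this is the content of \cite[Proposition~1.12]{Kir06CentralSeqPI}. What makes it go through is that $A^\perp \cap A_\filter$ is a \emph{$\sigma$-ideal} in $A' \cap A_\filter$ — for every separable $C \subseteq A' \cap A_\filter$ there is a positive contraction in $(A^\perp \cap A_\filter) \cap C'$ acting as a unit on $(A^\perp \cap A_\filter) \cap C$ — which is exactly what allows the relations, commutators and norm equalities above to be arranged modulo the annihilator, i.e.\ in $F(A)$ itself. (When $A$ is $\sigma$-unital, $1_{F(A)}$ is represented by an approximate unit of $A$; the hypothesis $1_{F(A)} \in D$ is what keeps track of unitality throughout the construction.) Carrying out the $\varepsilon$-test modulo a $\sigma$-ideal is the step I expect to need the most care; in the unital case it is invisible since then $A^\perp \cap A_\filter = 0$.

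For the final sentence I would iterate. Set $D_1 := D \subseteq F(A)$. Given pairwise-commuting copies $D_1, \dots, D_k$ of $D$ in $F(A)$, let $B_k := C^*(D_1 \cup \dots \cup D_k)$, which is separable, and apply the first part with $B := B_k$ to get a unital \stHom{} $D \to B_k' \cap F(A)$; its image $D_{k+1}$ commutes with each of $D_1, \dots, D_k$. The resulting sequence of unital \stHom{s} $D \to F(A)$ with pairwise-commuting ranges corresponds, by the universal property of the maximal tensor product, to a compatible family of unital \stHom{s} $\bigotimes^k_{\mathrm{max}} D \to F(A)$; passing to the inductive limit $\bigotimes^\infty_{\mathrm{max}} D = \varinjlim_k \bigotimes^k_{\mathrm{max}} D$ gives the desired unital \stHom{} $\bigotimes^\infty_{\mathrm{max}} D \to F(A)$.
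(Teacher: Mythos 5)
Your proposal is correct and matches the paper's treatment: the paper states this result without proof, attributing it precisely to \cite[Proposition~1.12]{Kir06CentralSeqPI} for the nonunital case and to \cite[Proposition~2.2, Corollary~2.3]{KirRor15CentralSeqCharacters} for the unital case, which are exactly the sources and mechanisms (the $\varepsilon$-test, the $\sigma$-ideal property of $A^\perp \cap A_\filter$, and the iteration via the universal property of $\bigotimes_{\mathrm{max}}$) that you invoke. Your reconstruction of the iteration step for the infinite maximal tensor product is also the standard argument and is sound.
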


\begin{thm}
\label{prp:CharZStable}
Let $A$ be a separable \ca.
Then the following are equivalent:
\begin{enumerate}[{\rm (1)}]
\item
The \ca{} $A$ is $\calZ$-stable.
\item
The (uncorrected) central sequence algebra $A' \cap A_\filter$ is pure.
\item
Kirchberg's central sequence algebra $F(A)$ is pure.
\end{enumerate}
\end{thm}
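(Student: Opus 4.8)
The plan is to prove the cycle of implications `(1)$\Rightarrow$(2)$\Rightarrow$(3)$\Rightarrow$(1)'.

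For `(1)$\Rightarrow$(2)': If $A$ is $\calZ$-stable, then by \cref{prp:CharDStable2}(4) the central sequence algebra $A' \cap A_\filter$ is separably $\calZ$-stable. By \cref{prp:SepZStablePure}, every separably $\calZ$-stable \ca{} is pure, so $A' \cap A_\filter$ is pure.

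For `(2)$\Rightarrow$(3)': Pureness passes to quotients. Indeed, strict comparison and almost divisibility are inherited by quotients of \ca{s} at the level of the Cuntz semigroup (the quotient map induces a surjective $\CatCu$-morphism $\Cu(A' \cap A_\filter) \to \Cu(F(A))$, and both almost unperforation and almost divisibility are preserved under surjective $\CatCu$-morphisms that are, moreover, quotient maps; alternatively one invokes that pureness is preserved by quotients, see \cite{AntPerThiVil24arX:PureCAlgs}). Since $F(A) = (A' \cap A_\filter)/(A^\perp \cap A_\filter)$ is a quotient of $A' \cap A_\filter$, it is pure.

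For `(3)$\Rightarrow$(1)': This is the main point. Assume $F(A)$ is pure. We want to produce a unital \stHom{} $\calD \to F(A)$ for $\calD = Z_{n,n+1}$, and then upgrade to $\calZ$-stability. First, note that $F(A)$ is unital since $A$ is separable (hence $\sigma$-unital). Applying \cref{prp:PureDimDrop} to the unital pure \ca{} $F(A)$, we obtain unital \stHom{s} $Z_{n,n+1} \to F(A)$ for every $n$. However, a single such map does not immediately give $\calZ$-stability; one needs these maps (for a suitable sequence of $n$) to be compatible so as to build a unital \stHom{} $\calZ \to F(A)$, or else one needs an approximately central sequence of such maps. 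Here is where \cref{prp:IntoCommutingPosition} enters: taking $D = Z_{n,n+1}^\sim$ (or more carefully a unital copy of $Z_{n,n+1}$) and using that $F(F(A))$ relates back to $F(A)$, Kirchberg's result lets us move a unital copy of $Z_{n,n+1}$ into the commutant of any separable subalgebra, in particular producing the approximately central behaviour. Concretely, the strategy is: (i) fix the standard sequence $Z_{2^k,2^k+1}$ (or rather a connecting sequence of dimension-drop algebras whose limit is $\calZ$, using the inductive limit description of $\calZ$ from Jiang--Su); (ii) use \cref{prp:PureDimDrop} together with \cref{prp:IntoCommutingPosition} to inductively build unital \stHom{s} $Z_{n_k, n_k+1} \to F(A)$ with pairwise (approximately) commuting images, yielding a unital \stHom{} from the inductive limit $\calZ \to F(A)$; (iii) conclude by the implication `(4)$\Rightarrow$(1)' of \cref{prp:CharDStable} (applied with $\calD = \calZ$) that $A$ is $\calZ$-stable.

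The main obstacle is step (iii) of the last implication: passing from the existence of unital \stHom{s} $Z_{n,n+1} \to F(A)$ for all $n$ to a unital \stHom{} $\calZ \to F(A)$. A single dimension-drop algebra does not suffice, and one must carefully assemble a coherent sequence. The key tool for handling the compatibility of the connecting maps is Kirchberg's \cref{prp:IntoCommutingPosition}, which provides the room inside $F(A)$ to place successive dimension-drop algebras in commuting position; one must check that the standard connecting maps in the Jiang--Su inductive limit can be realized approximately inside $F(A)$, for which one uses that $F(A)$ is unital and that approximate unitary equivalence of maps out of dimension-drop algebras can be arranged. Once a unital \stHom{} $\calZ \to F(A)$ is in hand, \cref{prp:CharDStable} closes the argument.
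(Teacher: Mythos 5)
Your implications (1)$\Rightarrow$(2) and (2)$\Rightarrow$(3) coincide with the paper's argument (the paper cites \cite[Theorem~4.11]{PerThiVil25arX:ExtPureCAlgs} for the passage of pureness to quotients, but your reasoning is the same). The problem is in (3)$\Rightarrow$(1). You correctly isolate the hard point --- passing from unital \stHom{s} $Z_{n,n+1} \to F(A)$ to a unital \stHom{} $\calZ \to F(A)$ --- but your proposed resolution, namely realizing the Jiang--Su inductive system of dimension-drop algebras inside $F(A)$ and ``checking that the standard connecting maps can be realized approximately'', is left entirely unexecuted, and it is not a routine check. Building a \stHom{} out of an inductive limit requires maps that intertwine the connecting maps (at least approximately, followed by an intertwining argument), and neither \cref{prp:PureDimDrop} nor \cref{prp:IntoCommutingPosition} gives you any control over how the various unital copies of $Z_{n_k,n_k+1}$ sit relative to one another beyond commuting position. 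As written, this step is a genuine gap: you have named the obstacle but not overcome it.

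The missing ingredient that collapses the difficulty is the theorem of Dadarlat and Toms \cite[Theorem~1.1]{DadTom09ZStabInfTensProd}: the Jiang--Su algebra embeds unitally into the infinite tensor power $\bigotimes^\infty Z_{2,3}$ (minimal and maximal tensor powers agree here). With this in hand one needs only a \emph{single} unital \stHom{} $Z_{2,3}\to F(A)$, which \cref{prp:PureDimDrop} provides since $F(A)$ is unital and pure; the second half of \cref{prp:IntoCommutingPosition} then directly yields a unital \stHom{} $\bigotimes^\infty_{\mathrm{max}} Z_{2,3} \to F(A)$, and composing gives
\[
\calZ \to \bigotimes^\infty_{\mathrm{min}} Z_{2,3} = \bigotimes^\infty_{\mathrm{max}} Z_{2,3} \to F(A).
\]
The implication `(4)$\Rightarrow$(1)' of \cref{prp:CharDStable} then finishes, exactly as in your step (iii). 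No coherence between different dimension-drop algebras, no realization of the Jiang--Su connecting maps, and no approximate unitary equivalence arguments are needed.
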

\begin{proof}
Assuming~(1), it follows from the implication `(1)$\Rightarrow$(4)' in \cref{prp:CharDStable2} that $A' \cap A_\filter$ is separably $\calZ$-stable, and therefore pure by \cref{prp:SepZStablePure}, thus verifying~(2).
The implication `(2)$\Rightarrow$(3)' follows since $F(A)$ is a quotient of $A' \cap A_\filter$, and since pureness passes to quotients by \cite[Theorem~4.11]{PerThiVil25arX:ExtPureCAlgs}.

Assuming~(3), let us verify~(1).
Since $F(A)$ is pure, there exists a unital \stHom{} $Z_{2,3} \to F(A)$ by \cref{prp:PureDimDrop}.
This is well-known to imply that~$A$ is $\calZ$-stable;
see, for example, \cite[Theorem~2.5]{KirRor15CentralSeqCharacters} in case $A$ is unital.
For completeness, we include the argument:
Using \cite[Theorem~1.1]{DadTom09ZStabInfTensProd} and \cref{prp:IntoCommutingPosition}, we obtain unital \stHom{s}
\[
\calZ \to 
\bigotimes_{\mathrm{min}}^\infty Z_{2,3} 
= \bigotimes_{\mathrm{max}}^\infty Z_{2,3} \to F(A).
\]
Applying the implication `(4)$\Rightarrow$(1)' in \cref{prp:CharDStable}, it follows that $A$ is $\calZ$-stable.
\end{proof}

\begin{rmk}
Analogous to \cref{prp:CharZStable}, one can show that $\calO_\infty$-stability of a separable \ca{} $A$ is encoded in the Cuntz semigroups of $A' \cap A_\filter$ and $F(A)$.
Specifically, the following are equivalent:
\begin{enumerate}
\item
The \ca{} $A$ is $\calO_\infty$-stable.
\item
The (uncorrected) central sequence algebra $A' \cap A_\filter$ is purely infinite.
\item
Kirchberg's central sequence algebra $F (A)$ is purely infinite.
\end{enumerate}

This relies on the fact that a separably $\calO_\infty$-absorbing \ca{} is purely infinite (analogous to \cref{prp:SepZStablePure}), and that a unital, purely infinite \ca{} admits a unital \stHom{} from $\calO_\infty$. 
\end{rmk}

\begin{thm}
\label{prp:CharSepZStable}
Let $A$ be a \ca.
Then the following are equivalent:
\begin{enumerate}[{\rm (1)}]
\item
The \ca{} $A$ is separably $\calZ$-stable.
\item
The relative commutant $B' \cap A_\filter$ is pure for every separable $B \subseteq A_\filter$.
\item
The quotient $(B' \cap A_\filter)/(B^\perp \cap A_\filter)$ is pure for every separable $B \subseteq A_\filter$.
\end{enumerate}
\end{thm}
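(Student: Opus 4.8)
The plan is to mirror the strategy of \cref{prp:CharZStable}, but now working relative to an arbitrary separable subalgebra $B \subseteq A_\filter$ and invoking the characterizations of \emph{separable} $\calZ$-stability from \cref{prp:CharSepDStable} in place of \cref{prp:CharDStable2}. I would prove the cycle `(1)$\Rightarrow$(2)$\Rightarrow$(3)$\Rightarrow$(1)'.

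For `(1)$\Rightarrow$(2)': assume $A$ is separably $\calZ$-stable, and let $B \subseteq A_\filter$ be separable. By the equivalence `(1)$\Leftrightarrow$(3)' in \cref{prp:CharSepDStable} (with $\calD = \calZ$), the relative commutant $B' \cap A_\filter$ is separably $\calZ$-stable; by \cref{prp:SepZStablePure} it is therefore pure. For `(2)$\Rightarrow$(3)': the quotient $(B' \cap A_\filter)/(B^\perp \cap A_\filter)$ is a quotient of the pure \ca{} $B' \cap A_\filter$, so it is pure because pureness passes to quotients by \cite[Theorem~4.11]{PerThiVil25arX:ExtPureCAlgs}. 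This direction is essentially immediate once (2) is known.

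The substantive direction is `(3)$\Rightarrow$(1)'. Here I would fix a separable sub-\ca{} $B \subseteq A_\filter$ and set $Q := (B' \cap A_\filter)/(B^\perp \cap A_\filter)$, which is unital (as the annihilator always contains a unit for the commutant modulo itself, or rather $Q$ is unital because $B' \cap A_\filter$ contains an element acting as a unit modulo $B^\perp \cap A_\filter$; when $B$ is $\sigma$-unital this is standard, and a general separable $B$ is $\sigma$-unital). By hypothesis $Q$ is pure, so by \cref{prp:PureDimDrop} it admits a unital \stHom{} $Z_{n,n+1} \to Q$ for every $n$, and in particular $Z_{2,3} \to Q$. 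To upgrade this to a copy of $\calZ$ I would want an analog of \cref{prp:IntoCommutingPosition} for the algebras $Q$ — i.e.\ that finitely (hence countably) many separable unital subalgebras of $Q$ can be brought into commuting position — so that \cite[Theorem~1.1]{DadTom09ZStabInfTensProd} yields a unital \stHom{} $\calZ \to \bigotimes^\infty_{\min} Z_{2,3} = \bigotimes^\infty_{\max} Z_{2,3} \to Q$. Granting this, condition~(5) of \cref{prp:CharSepDStable} holds (a unital \stHom{} $\calD \to (B' \cap A_\filter)/(B^\perp \cap A_\filter)$ for every separable $B$), and the equivalence `(1)$\Leftrightarrow$(5)' there gives that $A$ is separably $\calZ$-stable.

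The main obstacle is precisely establishing the "commuting position" input for $Q = (B' \cap A_\filter)/(B^\perp \cap A_\filter)$, since $Q$ is not literally of the form $F(\,\cdot\,)$ and \cref{prp:IntoCommutingPosition} as stated applies only to Kirchberg's central sequence algebra of a separable \ca. One clean workaround is to lift the situation to the sequence algebra: choose a separable $B_0 \subseteq A_\infty$ with $\pi(B_0) = B$ (as in the proof of `(1)$\Leftrightarrow$(5)' in \cref{prp:CharSepDStable}), observe that $(B_0' \cap A_\infty)/(B_0^\perp \cap A_\infty)$ surjects onto $Q$, and that the former fits the framework of \cite[Proposition~1.7]{FarSza24arX:CoronaSSA} / \cite[Proposition~1.12]{Kir06CentralSeqPI}, so $Z_{2,3} \to Q$ can be replaced by $Z_{2,3} \to (B_0' \cap A_\infty)/(B_0^\perp \cap A_\infty)$ up to a reindexing argument, where the reindexing (Kirchberg's $\varepsilon$-test) does supply the needed commuting sequence of copies. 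Alternatively, and perhaps more economically, one simply notes that \cref{prp:PureDimDrop} already gives unital \stHom{s} $Z_{n,n+1} \to Q$ for \emph{all} $n$ simultaneously, and a result of Rørdam–Winter type (as used in the proof of \cref{prp:CharZStable}) combined with the infinite-tensor-power trick inside the relative commutant of the already-embedded dimension-drop algebra — which does have room because $Q$ is pure, hence $Q$ absorbs $Z_{2,3}$ centrally — produces $\calZ \to Q$. I would write the lift-to-$A_\infty$ version, as it most directly reuses \cite[Proposition~1.7]{FarSza24arX:CoronaSSA} and keeps the argument parallel to \cref{prp:CharZStable}.
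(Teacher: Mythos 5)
Your cycle and your treatment of `(1)$\Rightarrow$(2)$\Rightarrow$(3)' coincide with the paper's proof, and you correctly locate the crux of `(3)$\Rightarrow$(1)': one must produce not just one unital \stHom{} $Z_{2,3} \to Q$ (which pureness of $Q$ gives via \cref{prp:PureDimDrop}) but a \emph{commuting} sequence of them, so that \cite[Theorem~1.1]{DadTom09ZStabInfTensProd} applies. However, neither of your two proposed resolutions of this obstacle actually closes it. The lift-to-$A_\infty$ route goes the wrong way: the natural map runs from $(B_0' \cap A_\infty)/(B_0^\perp \cap A_\infty)$ \emph{to} $Q$, so a unital \stHom{} $Z_{2,3} \to Q$ obtained from the pureness hypothesis does not transfer to the $A_\infty$-side, where the Kirchberg/Farah--Szab\'{o} $\sigma$-ideal machinery lives; you would first have to lift it, which is precisely the kind of reindexing statement you have not verified (and the surjectivity of the induced map on these quotients is itself nontrivial for nonseparable $A$). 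The second alternative is unjustified: pureness of $Q$ gives unital embeddings of dimension-drop algebras, but the assertion that ``$Q$ absorbs $Z_{2,3}$ centrally because $Q$ is pure'' is exactly what needs to be proved, not something that follows from pureness.

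The paper's resolution is much simpler and uses no reindexing at all: since hypothesis~(3) quantifies over \emph{every} separable sub-\ca{} of $A_\filter$, one enlarges $B$. Given a separable $C \subseteq Q$, choose a separable preimage $C_0 \subseteq B' \cap A_\filter$ and set $E := C^*(C_0 \cup B)$, which is again separable; by~(3) the algebra $(E' \cap A_\filter)/(E^\perp \cap A_\filter)$ is pure and hence admits a unital \stHom{} from $Z_{2,3}$, and this map factors through to land in $C' \cap Q$ (the annihilator of $E$ contains the annihilator considerations needed, and elements commuting with $E$ commute with $C_0$ modulo nothing extra). This shows $Q$ is $Z_{2,3}$-saturated, after which one builds the commuting copies $\varphi_1, \varphi_2, \ldots$ successively by applying saturation to the separable algebra generated by the previously constructed images. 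You should replace your workarounds with this enlargement argument; the rest of your write-up then goes through verbatim and matches the paper.
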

\begin{proof}
We show the implications `(1)$\Rightarrow$(2)$\Rightarrow$(3)$\Rightarrow$(1)'.

(1)$\Rightarrow$(2):
Assume~(1).
To verify~(2), let $B \subseteq A_\filter$ be a separable sub-\ca{}. 
It follows from the implication `(1)$\Rightarrow$(3)' in \cref{prp:CharSepDStable} that $B' \cap A_\filter$ is separably $\calZ$-stable, and therefore pure by \cref{prp:SepZStablePure}.

\smallskip

(2)$\Rightarrow$(3):
This follows since pureness passes to quotients by \cite[Theorem~4.11]{PerThiVil25arX:ExtPureCAlgs}.

\smallskip

(3)$\Rightarrow$(1):
Assume~(3).
We will verify condition~(5) in \cref{prp:CharSepDStable}, which then implies that~$A$ is separably $\calZ$-stable.
Let $B \subseteq A_\filter$ be a separable sub-\ca{}. 
We need to find a unital \stHom{} $\calZ \to (B' \cap A_\filter)/(B^\perp \cap A_\filter)$.

Set $Q := (B' \cap A_\filter)/(B^\perp \cap A_\filter)$ and let $\pi \colon B' \cap A_\filter \to Q$ denote the quotient map.
We first show that $Q$ is $Z_{2,3}$-saturated.
Let $C \subseteq Q$ be a separable sub-\ca{}.
Choose a separable preimage, that is, a separable sub-\ca{} $C_0 \subseteq B' \cap A_\filter$ with $\pi(C_0)=C$.
Since the sub-\ca{} $E$ of $A_\filter$ generated by $C_0$ and $B$ is separable, it follows from the assumption that $(E' \cap A_\filter)/(E^\perp \cap A_\filter)$ is pure and therefore admits a unital \stHom{} from $Z_{2,3}$ by \cref{prp:PureDimDrop}.

The map $\pi$ induces a \stHom{} $E' \cap A_\filter \to C' \cap Q$ whose image contains the unit of $C' \cap Q$.
Since the annihilator of $C_0 \cup B$ is contained in the annihilator of $B$, this map factors through the quotient by $E^\perp \cap A_\filter$.
We obtain the desired map as the composition of the unital \stHom{s}
\[
Z_{2,3} 
\to (E' \cap A_\filter)/(E^\perp \cap A_\filter)
\to C' \cap (B' \cap A_\filter)/(B^\perp \cap A_\filter)
= C' \cap Q.
\]
This shows that $Q$ is $Z_{2,3}$-saturated.

Using this, we first find any unital \stHom{} $\varphi_1 \colon Z_{2,3} \to Q$, and then successively unital \stHom{s} $\varphi_n\colon Z_{2,3} \to Q$ for $n \geq 2$ such that
\[
\varphi_{n}(Z_{2,3}) \subseteq \big( \varphi_1(Z_{2,3}) \cup \cdots \cup \varphi_{n-1}(Z_{2,3}) \big)' \cap Q.
\]
This induces a unital \stHom{} $\bigotimes_{\mathrm{max}}^\infty Z_{2,3} \to Q$.
Using \cite[Theorem~1.1]{DadTom09ZStabInfTensProd}, we obtain unital \stHom{s}
\[
\calZ \to 
\bigotimes_{\mathrm{min}}^\infty Z_{2,3} 
= \bigotimes_{\mathrm{max}}^\infty Z_{2,3} \to Q. \qedhere
\]
\end{proof}

\section{Divisibility of central sequence algebras}
\label{sec:CentralDiv}

In this section, we show that $\calZ$-stability is equivalent to good central divisibility (that is, the central sequence algebra is \emph{almost divisible}). One could phrase this by saying that good central comparison is a consequence of good central divisibility.
In fact, $\calZ$-stability already follows if the central sequence algebra satisfies the weaker property of \emph{$n$-almost divisibility} for some $n$;
see \cref{prp:CentralDiv}.
It remains unclear if it suffices to assume the even weaker property of \emph{functional divisibility} in the central sequence algebras;
see \cref{qst:CentralFuncDiv}.


Given $N \in \NN$ with $N \geq 1$, we define the relation $\leq_N$ on a \CuSgp{} by setting $x \leq_N y$ if $nx \leq ny$ for all $n\geq N$.
We recall the definition of controlled comparison from \cite[Definition~3.6]{AntPerThiVil24arX:PureCAlgs};
see also \cite[Definition~6.1]{AntPerRobThi24TracesUltra}. For this, for any $[a]\in \Cu(A)$, we denote as costumary by $\widehat{[a]}$ the (lower semicontinuous) function defined on $QT(A)$ by $\widehat{[a]}(\tau)=d_\tau(a)$. 

\begin{dfn} 
\label{dfn:ContrComp}
A \ca{} $A$ is said to have \emph{controlled comparison} if for every $\gamma \in (0,1)$ and $d \geq 1$, there exists $N = N(\gamma, d)\geq 1$ such that the following statement holds:

For every $a,b \in M_d(A)_+$, if $\widehat{[a]} \leq \gamma \widehat{[b]}$ then $[a] \leq_N [b]$.
\end{dfn}

We will use the characterization of controlled comparison for \emph{stable} \ca{s} given below, which is an easy consequence of \cite[Corollary~7.6]{AntPerRobThi24TracesUltra}. Recall the relation $<_s$ from \cref{sec:purecentral}.

\begin{prp}
\label{prp:ContrCompStableCAlg}
Let $A$ be a stable \ca{}.~Then the following are equivalent:
\begin{enumerate}[{\rm (1)}]
\item
The \ca{} $A$ has controlled comparison.
\item
There exists $N\in\NN$ such that $\widehat{x} \leq \widehat{y}$ implies $x \leq Ny$, for all $x,y \in \Cu(A)$.
\item
There exists $M\in\NN$ such that $x <_s y$ implies $x \leq My$, for all $x,y \in \Cu(A)$.
\end{enumerate} 
\end{prp}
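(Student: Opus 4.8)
The plan is to establish the cycle of implications (1)$\Rightarrow$(2)$\Rightarrow$(3)$\Rightarrow$(1), proving the first two by hand and invoking \cite[Corollary~7.6]{AntPerRobThi24TracesUltra} for the last one, which carries the substantive content.

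For (1)$\Rightarrow$(2) I would first exploit stability: since $A\cong A\otimes\Cpct$, every Cuntz class in $\Cu(A)$ is represented by a positive element of $A$ and each amplification $M_d(A)$ may be identified with $A$, so controlled comparison for $d=1$ already applies to all pairs $x,y\in\Cu(A)$. Given such $x,y$ with $\widehat{x}\le\widehat{y}$, note that $\widehat{x}\le\widehat{y}=\tfrac12\widehat{2y}$; applying \cref{dfn:ContrComp} with $\gamma=\tfrac12$ and $d=1$ yields a single constant $N:=N(\tfrac12,1)$ with $x\le_N 2y$. Taking $n=N$ in the definition of $\le_N$ gives $Nx\le 2Ny$, and since $x\le Nx$ always holds, we obtain $x\le 2Ny$. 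Thus (2) holds with the uniform constant $2N(\tfrac12,1)$.

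For (2)$\Rightarrow$(3): if $x<_s y$, fix $n$ with $(n+1)x\le ny$; applying rank functions gives $(n+1)\widehat{x}\le n\widehat{y}$ as functions on $QT(A)$, whence $\widehat{x}\le\widehat{y}$ pointwise (at a quasitrace $\tau$ with $\widehat{x}(\tau)=\infty$ one gets $\widehat{y}(\tau)=\infty$, and otherwise $\widehat{x}(\tau)\le\tfrac{n}{n+1}\widehat{y}(\tau)\le\widehat{y}(\tau)$). Now (2) yields $x\le Ny$, so (3) holds with $M:=N$.

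It remains to prove (3)$\Rightarrow$(1), and I expect this to be the main obstacle. The difficulty is structural: from a rank-function inequality $\widehat{x}\le\widehat{y}$, or from a relation $x<_s y$ whose "denominator" is not bounded uniformly in $x,y$, one can only extract further inequalities of rank functions, and there is no elementary device to upgrade these to genuine inequalities in $\Cu(A)$ with a uniform multiplicative error — replacing $x$ by an element $x'\ll x$ and attempting to manufacture a relation $x'<_s Ky$ with controlled $K$ still only produces a statement about rank functions. Supplying this upgrade — the passage from the rank-function picture to the order of the Cuntz semigroup, with uniform control — for stable \ca{s} is precisely the content of \cite[Corollary~7.6]{AntPerRobThi24TracesUltra}, which I would cite to deduce that (3) implies controlled comparison, thereby closing the cycle.
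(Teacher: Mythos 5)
Your implications (1)$\Rightarrow$(2) and (2)$\Rightarrow$(3) are correct: the first is a sound by-hand derivation (using stability to reduce to $d=1$, $\gamma=\tfrac12$, and the elementary inequality $x \leq Nx$ to get the constant $2N(\tfrac12,1)$), and the second matches the paper's one-line argument. The problem is the step you lean on to close the cycle. \cite[Corollary~7.6]{AntPerRobThi24TracesUltra} is the source for the equivalence of (1) and (2) only --- that is exactly how the paper uses it --- and it says nothing about condition (3), whose hypothesis is phrased in terms of the relation $<_s$ rather than in terms of rank functions. So citing it for (3)$\Rightarrow$(1) does not close the cycle; the substantive new content of this proposition relative to that corollary is precisely the implication from (3) back to (1) (equivalently, to (2)), and you have not supplied it.

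The missing ingredient is a R{\o}rdam-type lemma upgrading a rank-function inequality to a $<_s$-relation after passing way below: if $x' \ll x$ and $\widehat{x} \leq \widehat{y}$, then $x' <_s 2y$ (this is \cite[Theorem~5.2.18]{AntPerThi18TensorProdCu}, applied after noting $\widehat{x} \leq \tfrac12\,\widehat{2y}$). With that in hand, (3)$\Rightarrow$(2) is immediate with uniform control: hypothesis (3) gives $x' \leq 2My$ for every $x' \ll x$, and taking the supremum over such $x'$ yields $x \leq 2My$, so (2) holds with $N = 2M$; then (2)$\Rightarrow$(1) follows from the cited corollary. This is exactly the step the paper proves, and it is the one your proposal identifies as the structural difficulty but then attributes to the wrong reference.
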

\begin{proof}
The equivalence of~(1) and~(2) is shown in \cite[Corollary~7.6]{AntPerRobThi24TracesUltra}.
Using that $x <_s y$ implies $\widehat{x} \leq \widehat{y}$, we deduce that~(2) implies~(3) with $M=N$.
Conversely, assume that~(3) holds with some $M\in\NN$.
We verify (2) for $N=2M$.
Let $x,y \in \Cu(A)$ satisfy $\widehat{x} \leq \widehat{y}$.
Then $\widehat{x} <_s \widehat{2y}$.
Let $x' \in \Cu(A)$ with $x' \ll x$.
By \cite[Theorem~5.2.18]{AntPerThi18TensorProdCu}, we deduce that $x' <_s 2y$, and thus $x' \leq 2My$ by assumption.
Since this holds for every element $x'$ way-below $x$, we deduce that $x \leq 2My$.
\end{proof}

\begin{prp}
\label{prp:ContrCompSufficient}
Let $A$ be a \ca{} (not necessarily stable).
Assume that there exists $M\in\NN$ such that $x <_s y$ implies $x \leq My$, for all $x,y \in \Cu(A)$.
Then~$A$ has controlled comparison.
\end{prp}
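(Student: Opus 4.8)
The plan is to reduce the non-stable case to the stable case, to which \cref{prp:ContrCompStableCAlg} applies. Concretely, I would work with the stabilization $A \otimes \Cpct$ and verify that the hypothesis on $A$ is inherited by $A \otimes \Cpct$, then invoke \cref{prp:ContrCompStableCAlg}; finally I would observe that controlled comparison for $A \otimes \Cpct$ yields controlled comparison for $A$ by restricting attention to the corners $M_d(A) \subseteq A \otimes \Cpct$. The first point to nail down is that the Cuntz semigroup is insensitive to stabilization in the relevant sense: $\Cu(A) \cong \Cu(A \otimes \Cpct)$ canonically (this is standard, e.g.\ via $\Cu(A) \cong \Cu(A \otimes \Cpct)$ using that the Cuntz semigroup is a stable invariant), so the relation ``$x <_s y \Rightarrow x \leq My$'' for all $x,y$ transfers verbatim from $\Cu(A)$ to $\Cu(A \otimes \Cpct)$. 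Hence \cref{prp:ContrCompStableCAlg} (the implication (3)$\Rightarrow$(1)) applies to $A \otimes \Cpct$ and gives us a function $N(\gamma,d)$ witnessing controlled comparison for $A \otimes \Cpct$.

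Next I would unwind what controlled comparison for $A \otimes \Cpct$ says and match it against \cref{dfn:ContrComp} for $A$. Fix $\gamma \in (0,1)$ and $d \geq 1$, and let $a,b \in M_d(A)_+$ with $\widehat{[a]} \leq \gamma \widehat{[b]}$ as functions on $QT(A)$. Since $QT(A) \cong QT(A \otimes \Cpct)$ compatibly with the rank functions $\widehat{[\,\cdot\,]}$ (again a standard identification, using the canonical bijection between quasitraces on $A$ and densely-finite lower semicontinuous quasitraces on $A \otimes \Cpct$, rescaled appropriately on the corner $M_d(A)$), the inequality $\widehat{[a]} \leq \gamma\widehat{[b]}$ persists when $a,b$ are viewed as positive elements of $A \otimes \Cpct$ under the corner embedding $M_d(A) \hookrightarrow A \otimes \Cpct$. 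Applying controlled comparison for $A \otimes \Cpct$ with the same parameters $\gamma$ and $d$ (note that $a,b$ already lie in a fixed $d \times d$ corner, which is what the ``$d$'' in \cref{dfn:ContrComp} records), we obtain $[a] \leq_{N(\gamma,d)} [b]$ in $\Cu(A \otimes \Cpct)$. Transporting this back along the isomorphism $\Cu(A) \cong \Cu(A \otimes \Cpct)$ gives $[a] \leq_{N(\gamma,d)} [b]$ in $\Cu(A)$, which is exactly the required conclusion with $N(\gamma,d)$ as the controlling constant for $A$.

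I expect the main obstacle to be purely bookkeeping rather than conceptual: one must be careful that the ``size parameter'' $d$ in \cref{dfn:ContrComp} is respected under the passage to the stabilization. When $a,b \in M_d(A)_+$ are regarded as elements of $(A \otimes \Cpct)_+$, they sit inside the $d$-dimensional corner, so the instance of controlled comparison we invoke for $A \otimes \Cpct$ is the one with parameter $d$ (or, depending on the exact phrasing in \cref{prp:ContrCompStableCAlg}, one absorbs matrix amplifications harmlessly since $M_d(A \otimes \Cpct) \cong A \otimes \Cpct$). A second minor point is to confirm that the rank functions and the quasitrace simplices really do match up under these identifications so that $\widehat{[a]} \leq \gamma\widehat{[b]}$ is literally the same statement in both algebras; this is routine from the definition $d_\tau(a) = \lim_n \tau(a^{1/n})$ and the standard correspondence of (lower semicontinuous, $2$-quasi)traces under stabilization. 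Once these identifications are in place, the proof is a one-line transfer, and I would keep it correspondingly short.
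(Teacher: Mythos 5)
Your proposal is correct and follows essentially the same route as the paper: transfer the hypothesis to $\Cu(A\otimes\Cpct)\cong\Cu(A)$, apply \cref{prp:ContrCompStableCAlg} to the stabilization, and then observe that controlled comparison descends from $A\otimes\Cpct$ to $A$ via the corner embeddings $M_d(A)\hookrightarrow A\otimes\Cpct$ and the standard matching of quasitraces and rank functions. The paper states this descent in one line; your more careful bookkeeping of the parameter $d$ and the identification of $QT(A)$ with $QT(A\otimes\Cpct)$ is exactly the content being suppressed there.
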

\begin{proof}
By \cref{prp:ContrCompStableCAlg}, we deduce that $A\otimes\Cpct$ has controlled comparison.
It follows easily from \cref{dfn:ContrComp} that a \ca{} has controlled comparison whenever its stabilization does.
\end{proof}

\begin{prp}
\label{prp:CentralDiv}
Let $A$ be a separable \ca.
Then the following are equivalent:
\begin{enumerate}[{\rm (1)}]
\item
The \ca{} $A$ is $\calZ$-stable.
\item
The algebra $A' \cap A_\filter$ is almost divisible.
\item
The unit of $F(A)$ is $n$-almost divisible for some $n \in \NN$.
\end{enumerate}
\end{prp}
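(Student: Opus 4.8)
The plan is to prove the cycle $(1)\Rightarrow(2)\Rightarrow(3)\Rightarrow(1)$; the first two implications are soft and all the content lies in the third.

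For $(1)\Rightarrow(2)$: if $A$ is $\calZ$-stable, then the equivalence of $(1)$ and $(4)$ in \cref{prp:CharDStable2} shows that $A'\cap A_\filter$ is separably $\calZ$-stable, hence pure by \cref{prp:SepZStablePure}, and in particular almost divisible. For $(2)\Rightarrow(3)$: the quotient map $A'\cap A_\filter\to F(A)$ induces a surjective \CuMor, and since way-below pairs lift along surjective \CuMor{s}, almost divisibility passes from $A'\cap A_\filter$ to $F(A)$; as $A$ is separable, hence $\sigma$-unital, $F(A)$ is unital, so $[1_{F(A)}]$ is even $0$-almost divisible, which is statement~$(3)$ with $n=0$.

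For $(3)\Rightarrow(1)$, assume $[1_{F(A)}]$ is $n$-almost divisible for some $n$. By the implication $(3)\Rightarrow(1)$ of \cref{prp:CharZStable} it suffices to prove that $F(A)$ is pure. A convenient first step is to upgrade the divisibility hypothesis from the unit to all of $F(A)$: given $a\in F(A)_+$, use \cref{prp:IntoCommutingPosition} to place a separable unital subalgebra $D\subseteq F(A)$ that witnesses the $n$-almost divisibility of $[1_{F(A)}]$ into commuting position with $C^*(a)$, obtaining a \stHom{} $C^*(a)\otimes D\to F(A)$; pushing forward the exterior products $[a]\times[b_k]$ of $[a]$ with the divisors $b_k\in D$ of the unit, and using functoriality of the tensor product of Cuntz semigroups \cite{AntPerThi18TensorProdCu}, one checks that $[a]$ is $n$-almost divisible. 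Hence \emph{every} element of $F(A)$ is $n$-almost divisible.

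The main obstacle is then to improve $n$-almost divisibility to full pureness, and this is where the controlled-comparison technology of \cref{sec:CentralDiv} enters. I expect $n$-almost divisibility of $F(A)$, together with its self-absorbing behaviour from \cref{prp:IntoCommutingPosition}, to force a constant $M$ with $x<_sy\Rightarrow x\leq My$ in $\Cu(F(A))$, and hence controlled comparison of $F(A)$ by \cref{prp:ContrCompSufficient} (together with \cref{prp:ContrCompStableCAlg} and \cite[Corollary~7.6]{AntPerRobThi24TracesUltra} when passing through the stabilisation). It then remains to exploit the self-absorption once more in order to dilute the constant $M$ down to $1$ --- equivalently, to run the Rørdam--Winter \cite{RorWin10ZRevisited} and Dadarlat--Toms \cite{DadTom09ZStabInfTensProd} constructions inside $F(A)$ in a controlled, quantitative form --- and so conclude that $F(A)$ has strict comparison and is $0$-almost divisible, i.e.\ is pure. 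This quantitative step, rather than the formal manipulations above, is the delicate part of the argument; once it is in place, \cref{prp:CharZStable} gives that $A$ is $\calZ$-stable.
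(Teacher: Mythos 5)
Your implications (1)$\Rightarrow$(2) and (2)$\Rightarrow$(3) are fine and agree with the paper (which quotes \cite[Lemma~4.9]{PerThiVil25arX:ExtPureCAlgs} for the passage of almost divisibility to quotients), and your architecture for (3)$\Rightarrow$(1) --- central divisibility forces a comparison constant, hence controlled comparison, which together with divisibility yields pureness --- is exactly the paper's. The problem is that the two steps you label as ``expected'' and ``delicate'' are precisely where all the content lies, and you do not supply either. The first has a short concrete proof that you must write out: given $x<_sy$ in $\Cu(F(A))$, pick $m$ with $(m+1)x\leq my$, and use $n$-almost divisibility of $[1]$ to get $z$ with $mz\leq[1]\leq(n+1)(m+1)z$. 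Realize $x,y$ and the relation $(m+1)\bar x\leq m\bar y$ in a separable unital $B\subseteq F(A)$, realize $z$ in a separable unital $D$, and use \cref{prp:IntoCommutingPosition} to place $D$ inside $B'\cap F(A)$. The induced bi-additive, order-preserving pairing $\psi\colon\Cu(B)\times\Cu(D)\to\Cu(F(A))$ (the map of \cite[Paragraph~6.4.10]{AntPerThi18TensorProdCu} composed with $\Cu$ of the multiplication map $B\otimes_{\mathrm{max}}D\to F(A)$) then gives
\[
x=\psi(\bar x,[1])\leq\psi\big((n+1)(m+1)\bar x,z\big)\leq\psi\big((n+1)m\bar y,z\big)=\psi\big((n+1)\bar y,mz\big)\leq(n+1)y,
\]
so the uniform constant is $M=n+1$ and \cref{prp:ContrCompSufficient} applies. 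Without this computation your assertion that some constant $M$ exists is not a proof.

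The second gap is your plan to ``dilute $M$ down to $1$'' by rerunning the R{\o}rdam--Winter and Dadarlat--Toms constructions quantitatively inside $F(A)$. This is neither carried out nor necessary: the same commuting-position trick propagates $n$-almost divisibility from $[1]$ to every class of $\Cu(F(A))$, and then \cite[Proposition~4.9]{AntPerThiVil24arX:PureCAlgs} upgrades this to functional divisibility, whence \cite[Theorem~5.7]{AntPerThiVil24arX:PureCAlgs} (functional divisibility plus controlled comparison implies pureness) shows that $F(A)$ is pure; \cref{prp:CharZStable} then gives $\calZ$-stability of $A$. So the ``quantitative step'' you defer is available as a citation, but as written your argument for (3)$\Rightarrow$(1) is incomplete at its central point.
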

\begin{proof}
We show the implications `(1)$\Rightarrow$(2)$\Rightarrow$(3)$\Rightarrow$(1)'.

(1)$\Rightarrow$(2):
If $A$ is $\calZ$-stable, then $A' \cap A_\filter$ is pure by \cref{prp:CharZStable}, and thus in particular almost divisible.

\smallskip

(2)$\Rightarrow$(3):
Assume that $A' \cap A_\filter$ is almost divisible.
Since $F(A)$ is a quotient of $A' \cap A_\filter$, and using that almost divisibility passes to quotients by \cite[Lemma~4.9]{PerThiVil25arX:ExtPureCAlgs}, it follows that $F(A)$ is almost divisible.
In particular, the unit of~$F(A)$ is $n$-almost divisible for some $n$ (namely $n=0$).

\smallskip

(3)$\Rightarrow$(1):
Let $n \in \NN$ be such that the unit in $F(A)$ is $n$-almost divisible.
We first verify the following claim:

\textbf{Claim:} \emph{Let $x,y \in \Cu(F(A))$ satisfy $x<_sy$.
Then $x \leq (n+1)y$.}

To prove the claim, pick $m \in \NN$ such that $(m+1)x \leq my$.
Using that $[1]$ is $n$-almost divisible, we obtain $z \in \Cu(F(A))$ such that
\[
mz \leq [1] \leq (n+1)(m+1)z.
\]

Since the elements $x$ and $y$, and the Cuntz subequivalence $(m+1)x \leq my$ are realized by countably many elements in $F(A)\otimes\Cpct$, we can find a separable, unital sub-\ca{} $B \subseteq F(A)$ such that $\Cu(B)$ contains elements $\overline{x},\overline{y}$ that are sent to $x$ and $y$ by the map $\Cu(B) \to \Cu(F(A))$ induced by the inclusion $B \to F(A)$, and such that $(m+1)\bar{x} \leq m\bar{y}$ in $\Cu(B)$.
Similarly, we can find a separable, unital sub-\ca{} $D \subseteq F(A)$ such that $\Cu(D)$ contains an element $\bar{z}$ such that $m\bar{z} \leq [1] \leq (n+1)(m+1)\bar{z}$ in $\Cu(D)$.

By \cref{prp:IntoCommutingPosition}, we may assume that $D$ is contained in $B' \cap F(A)$.
We obtain a unital \stHom{} $\pi \colon B \otimes_{\mathrm{max}} D \to F(A)$ such that $\pi(b \otimes 1)=b$ for $b \in B$.
As explained in \cite[Paragraph~6.4.10]{AntPerThi18TensorProdCu}, there exists a natural map $\varphi \colon \Cu(B)\times\Cu(D) \to \Cu( B \otimes_{\mathrm{max}} D )$ such that $\varphi([b],[d])=[b \otimes d]$ for $b \in B_+$ and $d \in D_+$.
We consider the composition $\psi := \Cu(\pi)\circ\varphi$, as shown in the following diagram:
\[
\xymatrix{
\Cu(B)\times\Cu(D) \ar[r]^-{\varphi} \ar@/_2pc/[rr]^{\psi}
& \Cu( B \otimes_{\mathrm{max}} D ) \ar[r]^-{\Cu(\pi)}
& \Cu( F(A) ).
}
\]
We note that $\psi$ is additive and order-preserving in each entry, and that $\psi(\bar{x},[1])=x$ and $\psi(\bar{y},[1])=y$.
We deduce that
\begin{align*}
x 
= \varphi(\bar{x}, [1])
&\leq \varphi(\bar{x},(n+1)(m+1)z) \\
&= \varphi((n+1)(m+1)\bar{x} , z) \\
&\leq \varphi((n+1)m\bar{y}, z) \\
&= \varphi((n+1)\bar{y} , mz)
\leq \varphi((n+1)\bar{y} , [1])
= (n+1)y,
\end{align*}
which proves the claim.


Now, it follows from \cref{prp:ContrCompSufficient} that $F(A)$ has controlled comparison.
By similar arguments as in the proof of the claim, one shows that the $n$-almost divisibility of the class of the unit in $F(A)$ propagates to all Cuntz classes, which means that $F(A)$ is $n$-almost divisible.
By \cite[Proposition~4.9]{AntPerThiVil24arX:PureCAlgs}, we get that that $F(A)$ is functionally divisible.
It then follows from \cite[Theorem~5.7]{AntPerThiVil24arX:PureCAlgs} that $F(A)$ is pure.
Applying \cref{prp:CharZStable}, we deduce that $A$ is $\calZ$-stable.
\end{proof}

In \cite{AntPerThiVil24arX:PureCAlgs}, the authors introduced \emph{functional divisibility} as a property weaker than $n$-almost divisibility for every $n$ and that, together with controlled comparison, still implies pureness.~With a view towards \cref{prp:CentralDiv}, the following question arises:

\begin{qst}
\label{qst:CentralFuncDiv}
Is a separable \ca{} $\calZ$-stable whenever its central sequence algebra is functionally divisible?
\end{qst}


\providecommand{\etalchar}[1]{$^{#1}$}
\providecommand{\bysame}{\leavevmode\hbox to3em{\hrulefill}\thinspace}
\providecommand{\noopsort}[1]{}
\providecommand{\mr}[1]{\href{http://www.ams.org/mathscinet-getitem?mr=#1}{MR~#1}}
\providecommand{\zbl}[1]{\href{http://www.zentralblatt-math.org/zmath/en/search/?q=an:#1}{Zbl~#1}}
\providecommand{\jfm}[1]{\href{http://www.emis.de/cgi-bin/JFM-item?#1}{JFM~#1}}
\providecommand{\arxiv}[1]{\href{http://www.arxiv.org/abs/#1}{arXiv~#1}}
\providecommand{\doi}[1]{\url{http://dx.doi.org/#1}}
\providecommand{\MR}{\relax\ifhmode\unskip\space\fi MR }
\providecommand{\MRhref}[2]{%
  \href{http://www.ams.org/mathscinet-getitem?mr=#1}{#2}
}
\providecommand{\href}[2]{#2}

\end{document}